\theoremstyle{plain}
\newtheorem{theorem}{Theorem}
\newtheorem{lemma}{Lemma}
\newtheorem{corollary}{Corollary}
\theoremstyle{definition}
\newtheorem{remark}{Remark}
\newtheorem{example}{Example}
\numberwithin{theorem}{section}
\numberwithin{lemma}{section}
\numberwithin{proposition}{section}
\numberwithin{corollary}{section}
\numberwithin{remark}{section}
\numberwithin{definition}{section}
\numberwithin{example}{section}
\DeclarePairedDelimiter\bracket{(}{)}
\newcommand{\br}[1]{\bracket*{#1}}
\DeclarePairedDelimiter\figbracket{\{}{\}}
\newcommand{\fbr}[1]{\figbracket*{#1}}
\DeclarePairedDelimiter\modbracket{|}{|}
\newcommand{\mbr}[1]{\modbracket*{#1}}
\DeclarePairedDelimiter\angbracket{\langle}{\rangle}
\newcommand{\abr}[1]{\angbracket*{#1}}
\DeclarePairedDelimiter\normbracket{\|}{\|}
\newcommand{\nbr}[1]{\normbracket*{#1}}
\title{TITLE}
\date{}
\title{Mirror Descent for Constrained Optimization Problems with Large Subgradient Values\footnote{Stonyakin F.S., Stepanov A.N., Gasnikov A.V., Titov A.A. Mirror descent for constrained optimization problems with large subgradient values of functional constraints // Computer Research and Modeling, 2020, vol. 12, no. 2, pp. 301-317.}}
\author[1,2]{Fedor~S.~Stonyakin}
\author[1]{Alexey~N.~Stepanov}
\author[2]{Alexander~A.~Titov}
\author[2]{Alexander~V.~Gasnikov}
\affil[1]{V.\,I.\,Vernadsky Crimean Federal University, Simferopol, Russia,

\text{\small email: fedyor@mail.ru, stepanov.student@gmail.com}.}
\affil[2]{Moscow Institute of Physics and Technology, Moscow, Russia,

\text{\small email: a.a.titov@phystech.edu, gasnikov@yandex.ru}.}
\begin{document}

\maketitle

\begin{abstract}
Based on the ideas of \cite{bib_Adaptive}, we consider the problem of minimization of the Lipschitz-continuous non-smooth functional $f$ with a non-positive convex (generally, non-smooth) Lipschitz-continuous functional constraint. We propose some novel strategies of step-sizes and adaptive stopping rules in Mirror Descent algorithms for the considered class of problems. It is shown that the methods are applicable to the objective functionals of various levels of smoothness. Applying the restart technique to the Mirror Descent Algorithm there was proposed an optimal method of solving optimization problems with strongly convex objective functionals. Estimates of the rate of convergence of the considered algorithms are obtained depending on the level of smoothness of the objective functional. These estimates indicate the optimality of considered methods from the point of view of the theory of lower oracle bounds. In addition, the case of a quasi-convex objective and functional constraint was considered.

\textbf{Keywords}: non-smooth constrained optimization, quasi-convex functional, adaptive mirror descent, level of smoothness, optimal method.
\end{abstract}


\section{Introduction}

Non-smooth convex constrained optimization problems play an important role in modern large-scale optimization and its applications \cite {bib_ttd,bib_Shpirko}. There are a lot of methods to solve such problems, among which one can mention the Mirror Descent Method  \cite{beck2003mirror,nemirovsky1983problem}.


Recently, in \cite{bib_Adaptive} algorithms for Mirror Descent with both adaptive step selection and adaptive stopping criterion were proposed. In addition, an optimal method was proposed for the special class of convex constrained optimization problems, when the gradient of the objective functional satisfies Lipschitz property. For example, quadratic functionals do not satisfy the Lipschitz condition, but their gradient does. An adaptive Mirror Descent algorithm, based on the ideology of \cite{bib_Nesterov,bib_Nesterov2016} was proposed to solve such problems in (\cite{bib_Adaptive}, Section 3.3).

In this paper we develop the above mentioned research and consider some modifications of algorithmic scheme (\cite{bib_Adaptive}, Section 3.3). More precisely, in proposed Algorithm 2 we consider a new approach to choosing a step in the method, as well as appropriate options for stopping criteria, which differ from \cite{bib_Adaptive}. It is important that we choose the non-productive step ($\nabla g(x^k)$ is the subgradient $g$ at the current point $x^k$) at the form $h_k = \frac{\varepsilon}{\|\nabla g(x^k)\|}$ instead of $h_k = \frac{\varepsilon}{\|\nabla g(x^k)\|^2}$ in \cite{bib_Adaptive}. This circumstance, as well as the appropriate choice of the number of iterations \eqref{stop_rule}, leads us to the fact that the method can run faster than the previous analogue (\cite{bib_Adaptive}, Section 3.3) in the case, when the values of the subgradients of the  functional constraint $g$ are large. Note that a method similar to Algorithm 2 was proposed in \cite{nemirovsky1983problem}  for the case of convex Lipschitz continuous functionals.

This paper substantiates the convergence rate estimates for the proposed version of the Mirror Descent method, proves its optimality from the point of view of the theory of lower bounds for objective functionals of various smoothness levels: which have a Lipschitz continuous gradient or satisfy the Lipschitz (H\"{o}lder) condition. It is also shown that the obtained estimates of the convergence rate are preserved for quasi-convex \cite{Nest_84,Konnov} objective functional and constraint (see e.g. \cite{Gas}, Exercise 2.7). 
Using the restart technique, the optimal method for strongly (quasi-)convex objective functionals is considered. The paper ends with some numerical experiments for geometric problems with functional constraints, which illustrate , that the proposed method can work faster compared to \cite{bib_Adaptive}, Section 3.3. There are also given some examples of more efficient methods in the case of large dimensionality.

The contribution of this paper is as follows:

- An analogue of the Mirror Descent method is considered (\cite{bib_Adaptive}, Section 3.3) for convex programming problems with another strategy for choosing a non-productive step. Estimates of the rate of its convergence and optimality are obtained in terms of lower bounds for convex objective functionals of various smoothness levels.

- It is shown that the obtained convergence rate estimates will are also valid for the case of the minimization problems with quasi-convex objective functionals of different smoothness levels.

- It is shown that for the H\"{o}lder-continuous quasi-convex objective the convergence rate is equal to $O\br{\frac{1}{\varepsilon^2}}$.

- Using the restart technique, an optimal method was proposed for the class of minimization problems with strongly (quasi-)convex H\"{o}lder-continuous objective functionals with the complexity estimate equal to $O\left(\frac{1}{\varepsilon}\right)$.

- Numerical experiments for geometrical problems (the Fermat-Torricelli-Steiner problem, the problem of the smallest covering ball) with convex constraints are presented. When (sub)gradient values of functional constraints are large the proposed method can work faster \cite{bib_Adaptive}. High-dimensional examples are also considered.

- Numerical experiments for the minimization of quasi-convex functionals are given.
An example of the smallest covering ball problem with a quasi-convex objective functional is also considered.





\section{Problem Statement and Mirror Descent Basics}

Let $(E,||\cdot||)$ be a normed finite-dimensional vector space and $E^*$ be its conjugate space with the norm:
$$||y||_*=\max\limits_x\{\langle y,x\rangle,||x||\leqslant1\},$$
where $\langle y,x\rangle$ is the value of the continuous linear functional $y$ at $x \in E$.

Let $Q\subset E$ be a (simple) closed convex set. Consider the following problem:
\begin{equation}\label{eqq2}
f(x) \rightarrow \min\limits_{x\in Q},
\end{equation}
s.t.
\begin{equation}
\label{problem_statement_g}
g(x) \leqslant 0
\end{equation}
Assume that convex functional $g$ satisfies the Lipschitz condition with a constant $M_g$:
\begin{equation}\label{eq1}
|g(x)-g(y)|\leqslant M_g\|x-y\|\;\forall x,y\in X.
\end{equation}

We consider cases of convex and quasi-convex objective $f$. Let $d : Q \rightarrow \mathbb{R}$ be a distance generating function (d.g.f) which is non-negative continuously differentiable and $1$-strongly convex w.r.t. the norm $\|\cdot\|$, i.e.
$$\forall x, y, \in Q \hspace{0.2cm} \langle \nabla d(x) - \nabla d(y), x-y \rangle \geqslant \| x-y \|^2,$$
and assume that there is a constant $\Theta_0$, such that $d(x_{*}) \leqslant \Theta_0^2,$ where $x_*$ is a solution of the problem (supposing that the problem is solvable).

For all $x, y\in Q \subset E$ consider the corresponding Bregman divergence
$$V(x, y) = d(y) - d(x) - \langle \nabla d(x), y-x \rangle.$$
The proximal mapping operator is defined as follows:
$$
\mathrm{Mirr}_x (p) = \arg\min\limits_{u\in Q} \big\{ \langle p, u \rangle + V(x, u) \big\} \;  \text{ for each } \;  x\in Q \; \text{ and } \;  p\in E^*.
$$
We assume for simplicity that $\mathrm{Mirr}_x (p)$ is easily computable.

\section{Mirror Descent Algorithms: New Step-Sizes Strategies}

Two Mirror Descent methods for optimization problems with one convex subdifferentiable functional constraint  were proposed in \cite{bib_Adaptive}. The convergence of the first of them is obtained for the case of the Lipschitz-continuous objective functional (see \cite{bib_Adaptive}, Section 3.1), while the convergence of the second is justified under the assumption that the gradient $\nabla f$ satisfies Lipschitz property (see \cite {bib_Adaptive}, p. 3.3). Let us remind namely, the second one.

\begin{algorithm}
\caption{Adaptive Mirror Descent}
\label{alg2}
\begin{algorithmic}[1]
\REQUIRE $\varepsilon>0,\Theta_0: \,d(x_*)\leqslant\Theta_0^2$
\STATE $x^0=argmin_{x\in X}\,d(x)$
\STATE $I=:\emptyset$
\STATE $N\leftarrow0$
\REPEAT
    \IF{$g(x^N)\leqslant\varepsilon$}
        \STATE $M_N=\|\nabla f(x^N)\|_*$, $h_N=\frac{\varepsilon}{M_N}$
        \STATE $x^{N+1}=Mirr_{x^N}(h_N\nabla f(x^N))\text{ // \emph{"productive steps"}}$
        \STATE $N\rightarrow I$
    \ELSE
        \STATE $M_N=\|\nabla g(x^N)\|_*$, $h_N=\frac{\varepsilon}{M_N^2}$
        \STATE $x^{N+1}=Mirr_{x^N}(h_N\nabla g(x^N))\text{ // \emph{"non-productive steps"}}$
    \ENDIF
    \STATE $N\leftarrow N+1$
\UNTIL{$2\frac{\Theta_0^2}{\varepsilon^2}\leqslant\sum_{j\not\in I}\frac{1}{M_j^2}+|I|$}
\ENSURE $\bar{x}^N:=argmin_{x^k,\;k\in I}\,f(x^k)$
\end{algorithmic}
\end{algorithm}


\begin {lemma}\label{LemmT}
Let us define the following function:
\begin {equation} \label {eq13}
\omega (\tau) = \max \limits_ {x \in X} \{f (x) -f (x _ *): \| x-x _ * \| \leqslant \tau \},
\end {equation} where $\tau$ is a positive number.
Then for any $ y \in X$
\begin {equation} \label {eq_lemma}
f (y) - f (x_*) \leqslant \omega (v_f (y, x_ *)),
\end {equation}
where
\begin{equation}\label{eqv_v_f}
v_f(y,x_*) = \left \langle \frac{\nabla f(y)}{\|\nabla f(y)\|}, y - x_*\right\rangle \text{ for } \nabla f(y) \neq 0
\end{equation}
and $v_f(y,x_*) = 0$ for $\nabla f(y) = 0$.
\end {lemma}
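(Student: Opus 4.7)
My plan is to realize $\omega(v_f(y,x_*))$ as an upper bound on $f(y) - f(x_*)$ by exhibiting a point $z \in X$ with $\|z-x_*\| \leq v_f(y,x_*)$ and $f(z) \geq f(y)$. Once such a $z$ is at hand, the definition of $\omega$ gives
\[
\omega(v_f(y,x_*)) \;\geq\; f(z) - f(x_*) \;\geq\; f(y) - f(x_*).
\]
The degenerate case $\nabla f(y) = 0$ is handled by the convention $v_f(y,x_*) = 0$ together with the observation that (in the convex setting) $y$ is then already a minimizer, so both sides of the target inequality vanish.

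For $\nabla f(y) \neq 0$, since $E$ is finite-dimensional the dual norm is attained, so I pick a unit vector $u \in E$ with $\langle \nabla f(y), u\rangle = \|\nabla f(y)\|_*$ and set $z := x_* + v_f(y,x_*)\, u$. Then $\|z - x_*\| = v_f(y,x_*)$, and a direct computation using $\langle \nabla f(y), y - x_*\rangle = v_f(y,x_*)\,\|\nabla f(y)\|_*$ yields $\langle \nabla f(y), z - y\rangle = 0$, i.e., $z$ lies on the supporting hyperplane to the sublevel set $\{f \leq f(y)\}$ at $y$. To conclude $f(z) \geq f(y)$, in the convex case I just invoke the subgradient inequality $f(z) \geq f(y) + \langle \nabla f(y), z - y\rangle = f(y)$; in the quasi-convex case I use the Nesterov characterization $f(w) \leq f(y) \Rightarrow \langle \nabla f(y), w - y\rangle \leq 0$ contrapositively on the perturbations $z_\epsilon := z + \epsilon u$ (which satisfy $\langle \nabla f(y), z_\epsilon - y\rangle = \epsilon\|\nabla f(y)\|_* > 0$, hence $f(z_\epsilon) > f(y)$) and pass $\epsilon \to 0^+$ using continuity of $f$.

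The main technical obstacle is guaranteeing $z \in X$, because the direction $u$ was chosen only to realize the dual-norm attainment and is blind to the geometry of $X$. For $X = E$, or when $x_*$ is interior with a ball of radius $v_f(y,x_*)$ inside $X$, the construction is immediately admissible. In a more genuinely constrained setting one would instead take $z$ to be the closest point of $X \cap \{f \geq f(y)\}$ to $x_*$ and invoke a structural hypothesis on $X$ (essentially, that the hyperplane face closest to $x_*$ is still reached within $X$) to preserve the bound $\|z - x_*\| \leq v_f(y,x_*)$; this is the only place where the argument is not completely formal in an arbitrary normed geometry.
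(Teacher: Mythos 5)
The paper does not actually prove Lemma \ref{LemmT}: it is quoted from Nesterov's book (Lemma 3.2.1 there), so there is no in-paper argument to compare against. Your reconstruction is precisely that standard proof: build the foot $z$ of the supporting hyperplane $\{w:\langle\nabla f(y),w-y\rangle=0\}$ at distance $v_f(y,x_*)$ from $x_*$, note $f(z)\geqslant f(y)$ by the (sub)gradient inequality (or by the normal-cone characterization of the level set in the quasi-convex case, exactly as the paper suggests when it replaces $\nabla f(y)$ by a normal to the level set), and then bound by $\omega$. The computation $\langle\nabla f(y),z-y\rangle=0$ via a dual-norm-attaining unit vector $u$ is correct, and your perturbation-plus-continuity argument for the quasi-convex case is sound. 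Two small remarks. First, the feasibility issue you flag ($z\in X$?) is real but is an artefact of the paper restricting the maximum in \eqref{eq13} to $x\in X$; in Nesterov's original formulation the maximum is taken over the whole space (more precisely over a ball around $x_*$ without the feasibility constraint), and then your construction closes with no extra hypothesis --- so you have correctly located the only point where the statement as written is imprecise rather than left a gap of your own. Second, you should say a word about the sign of $v_f(y,x_*)$: when it is negative the subgradient inequality already gives $f(y)\leqslant f(x_*)$ and the bound is trivial (and in the degenerate case $\nabla f(y)=0$ the left-hand side is $\leqslant 0$ rather than exactly $0$, since $y$ minimizes $f$ globally while $x_*$ is only the constrained minimizer); these are one-line fixes and do not affect the substance.
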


For Algorithm \ref{alg2} the following theorem is valid.

\begin{theorem}\label{th2}
Let $\varepsilon> 0 $ be a fixed number and the stopping criterion of Algorithm \ref{alg2} is satisfied. Then
\begin {equation} \label {eq09}
\min \limits_ {k \in I} v_f (x ^ k, x _ *) <\varepsilon, \ \ \max\limits_{k\in I} g(x^k)\leqslant \varepsilon
\end {equation}
Note, that Algorithm \ref {alg2} works no more than
\begin {equation} \label {eqq08}
N = \left \lceil \frac {2 \max \{1, M_g ^ 2 \} \Theta_0 ^ 2} {\varepsilon ^ 2} \right \rceil
\end {equation}
iterations.
\end {theorem}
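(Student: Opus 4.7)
The plan is to combine the standard one-step Mirror Descent inequality with the different step-size choices on productive and non-productive iterations, then telescope and compare the result to the stopping criterion.

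First I would write down the basic proximal inequality which, by $1$-strong convexity of $d$ and the definition of $\mathrm{Mirr}_{x^k}$, gives for any step
\begin{equation*}
h_k \langle \nabla \varphi_k(x^k), x^k - x_* \rangle \leqslant V(x^k, x_*) - V(x^{k+1}, x_*) + \frac{h_k^2}{2}\|\nabla \varphi_k(x^k)\|_*^2,
\end{equation*}
where $\varphi_k = f$ on a productive step and $\varphi_k = g$ on a non-productive step. On a productive step, plugging in $h_k = \varepsilon/M_k$ with $M_k = \|\nabla f(x^k)\|_*$ and recalling $\|\nabla f(x^k)\|_* v_f(x^k, x_*) = \langle \nabla f(x^k), x^k - x_*\rangle$, I get
\begin{equation*}
\varepsilon\, v_f(x^k, x_*) \leqslant V(x^k, x_*) - V(x^{k+1}, x_*) + \frac{\varepsilon^2}{2}.
\end{equation*}
On a non-productive step, convexity of $g$ and $g(x_*) \leqslant 0 < \varepsilon < g(x^k)$ give $\langle \nabla g(x^k), x^k - x_*\rangle \geqslant g(x^k) > \varepsilon$. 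With $h_k = \varepsilon/M_k^2$ this yields
\begin{equation*}
\frac{\varepsilon^2}{2 M_k^2} < V(x^k, x_*) - V(x^{k+1}, x_*).
\end{equation*}

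Next I would telescope: summing over $k = 0, \dots, N - 1$, the right-hand sides collapse to at most $V(x^0, x_*) \leqslant \Theta_0^2$, so
\begin{equation*}
\varepsilon \sum_{k \in I} v_f(x^k, x_*) - \frac{\varepsilon^2}{2} |I| + \frac{\varepsilon^2}{2} \sum_{k \notin I} \frac{1}{M_k^2} \leqslant \Theta_0^2.
\end{equation*}
Suppose for contradiction that $v_f(x^k, x_*) \geqslant \varepsilon$ for every $k \in I$. Then the left side is at least $\frac{\varepsilon^2}{2}\bigl(|I| + \sum_{k \notin I} 1/M_k^2\bigr)$, which combined with the stopping criterion $|I| + \sum_{k \notin I} 1/M_k^2 \geqslant 2\Theta_0^2/\varepsilon^2$ forces equality throughout and contradicts the strict inequality coming from the non-productive steps. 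Hence $\min_{k \in I} v_f(x^k, x_*) < \varepsilon$. The bound $\max_{k \in I} g(x^k) \leqslant \varepsilon$ is immediate from the very definition of a productive step.

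Finally, for the iteration complexity I would use the Lipschitz estimate $M_k = \|\nabla g(x^k)\|_* \leqslant M_g$ on every non-productive step, giving $1/M_k^2 \geqslant 1/M_g^2$. Consequently, after $N$ iterations,
\begin{equation*}
|I| + \sum_{k \notin I} \frac{1}{M_k^2} \geqslant |I| + \frac{N - |I|}{M_g^2} \geqslant \frac{N}{\max\{1, M_g^2\}},
\end{equation*}
so the stopping criterion is triggered once $N \geqslant 2\max\{1, M_g^2\}\Theta_0^2/\varepsilon^2$, giving the claimed bound. The only subtle point, which I expect to be the main obstacle for a careful write-up, is the strict versus non-strict inequalities on the two types of steps: it is exactly the strict inequality on non-productive steps (driven by $g(x^k) > \varepsilon$) that rules out the boundary case in the contradiction argument.
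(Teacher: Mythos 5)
Your proof is correct and follows essentially the same route the paper uses for the analogous Theorem 3.2 (Theorem 3.1 itself is stated in the paper without proof, being quoted from \cite{bib_Adaptive}): the one-step mirror-descent inequality, the split into productive and non-productive steps with the two step-size choices, telescoping against $V(x^0,x_*)\leqslant\Theta_0^2$, and comparison with the stopping rule, with the complexity bound obtained exactly as you do via $1/M_k^2\geqslant 1/M_g^2$ and the $\max\{1,M_g^2\}$ case split. The only caveat is the one you already flag yourself: if every step happens to be productive there is no strict inequality to invoke and the argument only yields $\min_{k\in I}v_f(x^k,x_*)\leqslant\varepsilon$, which is precisely the non-strict form in which the paper states the corresponding conclusion of Theorem 3.2.
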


Now we will estimate the rate of convergence of the proposed method. For this we need the following auxiliary assumption (\cite {bib_Nesterov}, Lemma 3.2.1). Recall that $ x_* $ ~ is the solution of the problem \eqref {eqq2} - \eqref {problem_statement_g}.

Basing on lemma \ref{LemmT} and theorem \ref {th2}, we can estimate the rate of convergence of Algorithm \ref {alg2} for a differentiable objective functional $ f $ with the Lipschitz-continuous gradient:
\begin {equation} \label {eqlipgrad}
\|\nabla f (x) - \nabla f (y)\|_* \leqslant L \|x-y\| \quad \forall x, y \in X.
\end {equation}

Using the well-known inequality for exact solution $x_*$ (see, for example, \cite {bib_Nesterov})
$$ f (x) \leqslant f (x_*) + \| \nabla f (x_*) \|_* \| x-x_* \| + \frac {1}{2} L \| x-x_* \|^2$$
we can get that
$$ \min\limits_ {k \in I} f (x^k) -f (x_*) \leqslant \min \limits_ {k \in I} \left \{\| \nabla f (x_*) \|_* \| x^k-x_* \| + \frac{1}{2} L \| x^k-x_* \| ^ 2 \right \}. $$
Further, the following estimate is valid:
$$ f (x) -f (x_*) \leqslant \varepsilon  \| \nabla f (x_*) \|_* + \frac{1}{2} L \varepsilon^2. $$

\begin {corollary} \label {cor1}
Let $f$ be differentiable on $X$ and \eqref {eqlipgrad} holds. Then, after the stopping of Algorithm \ref {alg2}, the next inequality holds:
$$ \min \limits_ {1 \leqslant k \leqslant N} f (x ^ k) -f (x _ *) \leqslant \varepsilon_f + \frac {L \varepsilon ^ 2} {2} =   \varepsilon \cdot \| \nabla f (x _ *) \| _ *  + \frac {L \varepsilon ^ 2} {2}.  $$

\end {corollary}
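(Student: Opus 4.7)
The plan is to combine three ingredients: Lemma~\ref{LemmT}, which bounds $f(y) - f(x_*)$ by $\omega(v_f(y, x_*))$; Theorem~\ref{th2}, which at termination guarantees some $\hat{k} \in I$ with $v_f(x^{\hat{k}}, x_*) < \varepsilon$; and the quadratic upper bound coming from the $L$-smoothness of $f$, namely $f(x) - f(x_*) \leq \|\nabla f(x_*)\|_* \|x - x_*\| + \tfrac{L}{2}\|x - x_*\|^2$, already written out in the paragraphs preceding the statement.

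First, I would translate the smoothness bound into a bound on $\omega$. By its very definition as a supremum, $\omega(\tau)$ is non-decreasing in $\tau$, and the right-hand side of the quadratic bound is itself non-decreasing in $\|x - x_*\|$; hence taking the supremum over $\|x - x_*\| \leq \tau$ yields $\omega(\tau) \leq \|\nabla f(x_*)\|_* \tau + \tfrac{L}{2}\tau^2$ for every $\tau \geq 0$. Next, I would pick $\hat{k} \in \arg\min_{k \in I} v_f(x^k, x_*)$; this makes sense because Theorem~\ref{th2} ensures $I$ is non-empty at termination. Note that $v_f(y, x_*) \geq 0$: convexity of $f$ gives $\langle \nabla f(y), y - x_*\rangle \geq f(y) - f(x_*) \geq 0$, so evaluating $\omega$ at $v_f$ is well-defined. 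Chaining Lemma~\ref{LemmT}, monotonicity of $\omega$, the upper bound on $\omega$, and the conclusion of Theorem~\ref{th2},
\begin{equation*}
f(x^{\hat{k}}) - f(x_*) \leqslant \omega\br{v_f(x^{\hat{k}}, x_*)} \leqslant \omega(\varepsilon) \leqslant \varepsilon \cdot \|\nabla f(x_*)\|_* + \frac{L\varepsilon^2}{2}.
\end{equation*}
Since $\min_{1 \leqslant k \leqslant N} f(x^k) - f(x_*) \leqslant f(x^{\hat{k}}) - f(x_*)$, this is exactly the asserted estimate.

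There is no genuine analytic obstacle here; the corollary is essentially a packaging of the three preceding results. The only bookkeeping items that must be checked are the non-negativity $v_f(x^{\hat{k}}, x_*) \geqslant 0$, so that monotonicity of $\omega$ is applied in the correct direction, and the non-emptiness of $I$ at the stopping moment, both of which follow from convexity of $f$ and from the stopping-criterion analysis already carried out in Theorem~\ref{th2}.
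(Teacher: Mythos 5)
Your argument is correct and is essentially the paper's own proof: the paper likewise chains Lemma~\ref{LemmT}, the bound $\min_{k\in I} v_f(x^k,x_*)<\varepsilon$ from Theorem~\ref{th2}, and the quadratic upper bound $f(x)-f(x_*)\leqslant \|\nabla f(x_*)\|_*\|x-x_*\|+\tfrac{L}{2}\|x-x_*\|^2$ evaluated over the ball $\|x-x_*\|\leqslant\varepsilon$. Your write-up is in fact slightly more careful than the paper's (making explicit the monotonicity of $\omega$ and the non-emptiness of $I$); the only cosmetic quibble is that $f(x^{\hat k})\geqslant f(x_*)$ need not hold for the merely $\varepsilon$-feasible productive iterates, but if $v_f(x^{\hat k},x_*)<0$ then convexity gives $f(x^{\hat k})<f(x_*)$ and the claimed estimate is trivial, so nothing is lost.
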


\if
Let us consider a special class of non-smooth objective functionals.

\begin{corollary}\label{cor2}
Let $f(x) = \max\limits_{i = \overline{1, m}} f_i(x)$, where $f_i$ is differentiable $\forall x \in X$ and
$$\|\nabla f_i(x)-\nabla f_i(y)\|_*\leqslant L_i\|x-y\| \quad \forall x,y\in X.$$
Then after Algorithm \ref{alg4} stops, the next inequality holds: $$\min\limits_{0\leqslant k\leqslant N}f(x^k)-f(x_*)\leqslant\varepsilon_f+\frac{L\varepsilon^2}{2},$$
where
$$\varepsilon_f=\varepsilon \cdot \|\nabla f(x_*)\|_*, \quad L = \max\limits_{i = \overline{1, m}} L_i.$$
\end{corollary}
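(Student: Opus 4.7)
The plan is to reduce the problem to Corollary \ref{cor1} by exploiting the fact that although $f = \max_i f_i$ need not be differentiable, at every point $y \in X$ one can single out an active index $i(y) \in \arg\max_i f_i(y)$ whose component $f_{i(y)}$ is $C^1$ and has Lipschitz gradient. The two ingredients from earlier that I would reuse are Lemma \ref{LemmT} and the bound $\min_{k\in I} v_f(x^k,x_*) < \varepsilon$ from Theorem \ref{th2}; together they reduce the task to controlling the majorant $\omega(\tau)$ defined in \eqref{eq13} by a quadratic in $\tau$, just as in the smooth case.

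The first step I would carry out is to prove the pointwise estimate
$$f(y)-f(x_*) \le \|\nabla f(x_*)\|_*\,\|y-x_*\| + \frac{L}{2}\|y-x_*\|^2 \quad \forall y\in X.$$
Pick $i=i(y)\in\arg\max_j f_j(y)$, so that $f(y)=f_i(y)$, and use the trivial inequality $f_i(x_*)\le \max_j f_j(x_*)=f(x_*)$ to get $f(y)-f(x_*)\le f_i(y)-f_i(x_*)$. Since $\nabla f_i$ is $L_i$-Lipschitz, the standard smooth upper bound yields $f_i(y)-f_i(x_*) \le \|\nabla f_i(x_*)\|_*\|y-x_*\| + \tfrac{L_i}{2}\|y-x_*\|^2$, and the uniform majorizations $L_i \le L$ and $\|\nabla f_i(x_*)\|_* \le \|\nabla f(x_*)\|_*$ (with the right-hand side read as $\max_j\|\nabla f_j(x_*)\|_*$) close the estimate.

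Taking the supremum over $y$ with $\|y-x_*\|\le \tau$ then gives $\omega(\tau)\le \|\nabla f(x_*)\|_*\,\tau + \tfrac{L}{2}\tau^2$. By Lemma \ref{LemmT}, every productive iterate satisfies $f(x^k)-f(x_*)\le \omega(v_f(x^k,x_*))$, and by Theorem \ref{th2} at least one $k\in I$ has $v_f(x^k,x_*)<\varepsilon$. Since $\omega$ is nondecreasing, substituting this $k$ produces $\min_{k\in I} f(x^k)-f(x_*) \le \varepsilon\|\nabla f(x_*)\|_* + \tfrac{L\varepsilon^2}{2}$, and the minimum over all of $\{0,\ldots,N\}$ is of course no larger, which is the claim.

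The main obstacle is cosmetic rather than technical: one needs a consistent meaning of $\|\nabla f(x_*)\|_*$ when $f$ is the maximum of smooth functions and hence only subdifferentiable at $x_*$. The choice $\|\nabla f(x_*)\|_* := \max_i \|\nabla f_i(x_*)\|_*$ makes the chain above go through verbatim and coincides with the usual subgradient norm whenever the maximum is attained at a single index. Beyond this, the argument is a templated version of Corollary \ref{cor1}, the only new move being the component-wise majorization $f(y)-f(x_*)\le f_{i(y)}(y)-f_{i(y)}(x_*)$ that circumvents the nondifferentiability of $f$.
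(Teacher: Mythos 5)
Your argument is correct and follows essentially the same route as the paper's treatment of Corollary \ref{cor1}: bound $\omega(\tau)$ from Lemma \ref{LemmT} by the quadratic majorant $\|\nabla f(x_*)\|_*\,\tau+\frac{L}{2}\tau^2$ and plug in $\min_{k\in I}v_f(x^k,x_*)\leqslant\varepsilon$, the active-index majorization $f(y)-f(x_*)\leqslant f_{i(y)}(y)-f_{i(y)}(x_*)$ being exactly the extra step the max structure requires. Two cosmetic points: for Algorithm \ref{alg4} the relevant bound on $v_f$ is Theorem \ref{th2_new_methods} rather than Theorem \ref{th2}, and your convention $\|\nabla f(x_*)\|_*=\max_i\|\nabla f_i(x_*)\|_*$ is a safe (possibly slightly generous) reading of the constant $\varepsilon_f$, since $i(y)$ need not be active at $x_*$.
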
 \fi

Let us observe a new version of the adaptive Mirror Descent method with another step selection strategy. A resembling idea was researched in \cite{Nem-Yud} for Lipschitz-continuous functional. Note, that the following modification can be used to minimize functionals with different levels of smoothness. As earlier, we will consider the method for a fixed accuracy $\varepsilon>0$, an initial approximation $x^0$, and some value $\Theta_0$, such that $V(x^0,x_*)\leqslant \Theta_0^2$.

\begin{algorithm}
\caption{Adaptive Mirror Descent}
\label{alg4}
\begin{algorithmic}[1]
\REQUIRE $\varepsilon>0,\Theta_0: \,d(x_*)\leqslant\Theta_0^2$
\STATE $x^0=argmin_{x\in X}\,d(x)$
\STATE $I=:\emptyset$
\STATE $N\leftarrow0$
\REPEAT
    \IF{$g(x^N)\leqslant\varepsilon\|\nabla g(x^N)\|_*$}
        \STATE $M_N=\|\nabla f(x^N)\|_*$, $h_N=\frac{\varepsilon}{M_N}$
        \STATE $x^{N+1}=Mirr_{x^N}(h_N\nabla f(x^N))\text{ // \emph{"productive steps"}}$
        \STATE $N\rightarrow I$
    \ELSE
        \STATE $M_N=\|\nabla g(x^N)\|_*$, $h_N=\frac{\varepsilon}{M_N}$
        \STATE $x^{N+1}=Mirr_{x^N}(h_N\nabla g(x^N))\text{ // \emph{"non-productive steps"}}$
    \ENDIF
    \STATE $N\leftarrow N+1$
\UNTIL{$2\frac{\Theta_0^2}{\varepsilon^2}\leqslant N$}
\ENSURE $\bar{x}^N:=argmin_{x^k,\;k\in I}\,f(x^k)$
\end{algorithmic}
\end{algorithm}


Note, that Algorithm  \ref{alg4} works during a fixed number of steps
\begin{equation}\label{stop_rule}
N = \left\lceil\frac{2\Theta_0^2}{\varepsilon^2}\right\rceil.
\end{equation}
The following theorem holds.

\begin{theorem}\label{th2_new_methods}
Let $\varepsilon > 0$ be a fixed number and the stopping criterion of Algorithm \ref{alg4} be satisfied. Then
\begin{equation}\label{eq1_new_methods}
\min\limits_{k \in I} v_f(x^k,x_*) \leqslant \varepsilon, \ \ \max\limits_{k\in I} g(x^k)\leqslant\varepsilon M_g.
\end{equation}
\end{theorem}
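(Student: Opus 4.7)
\medskip

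\noindent\textbf{Proof plan.} The plan is to mimic the classical Mirror Descent analysis of \cite{bib_Adaptive} but adapted to the new step sizes $h_k=\varepsilon/\|\nabla g(x^k)\|_*$ on non-productive steps. The starting point is the standard one-step bound: if $x^{k+1}=\mathrm{Mirr}_{x^k}(h_k\nabla\phi(x^k))$ with $\phi\in\{f,g\}$, then
$$
h_k\langle\nabla\phi(x^k),x^k-x_*\rangle \leqslant V(x^k,x_*)-V(x^{k+1},x_*)+\frac{h_k^2\|\nabla\phi(x^k)\|_*^2}{2}.
$$
Since $x^0=\arg\min_{x\in X}d(x)$, we have $V(x^0,x_*)\leqslant d(x_*)\leqslant\Theta_0^2$, so this is the uniform bound to which everything will be compared.

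\noindent The second inequality in \eqref{eq1_new_methods} is immediate: for $k\in I$ the productive-step criterion gives $g(x^k)\leqslant\varepsilon\|\nabla g(x^k)\|_*\leqslant\varepsilon M_g$ by \eqref{eq1}. For the first inequality I would argue by contradiction. Assume $v_f(x^k,x_*)>\varepsilon$ for every $k\in I$. On a productive step, $h_k=\varepsilon/M_k$ with $M_k=\|\nabla f(x^k)\|_*$, and using $\langle\nabla f(x^k),x^k-x_*\rangle=M_k\,v_f(x^k,x_*)$ the one-step bound becomes
$$
\varepsilon\,v_f(x^k,x_*)\leqslant V(x^k,x_*)-V(x^{k+1},x_*)+\frac{\varepsilon^2}{2},
$$
so the assumption forces $V(x^k,x_*)-V(x^{k+1},x_*)>\varepsilon^2/2$.

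\noindent On a non-productive step, $h_k=\varepsilon/M_k$ with $M_k=\|\nabla g(x^k)\|_*$ and $g(x^k)>\varepsilon M_k$. Convexity of $g$ and $g(x_*)\leqslant 0$ give $\langle\nabla g(x^k),x^k-x_*\rangle\geqslant g(x^k)-g(x_*)>\varepsilon M_k$, hence
$$
\varepsilon^2 < h_k\langle\nabla g(x^k),x^k-x_*\rangle \leqslant V(x^k,x_*)-V(x^{k+1},x_*)+\frac{\varepsilon^2}{2},
$$
again yielding $V(x^k,x_*)-V(x^{k+1},x_*)>\varepsilon^2/2$. This uniform lower bound on the Bregman decrease is the crucial point: it holds regardless of whether the step is productive or not, because the new non-productive step length $\varepsilon/\|\nabla g(x^k)\|_*$ is exactly balanced against the threshold appearing in the productive-step test. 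Telescoping over all $N$ iterations gives $N\varepsilon^2/2<V(x^0,x_*)\leqslant\Theta_0^2$, i.e.\ $N<2\Theta_0^2/\varepsilon^2$, which contradicts the stopping rule $N\geqslant\lceil 2\Theta_0^2/\varepsilon^2\rceil$. Therefore some $k\in I$ satisfies $v_f(x^k,x_*)\leqslant\varepsilon$ (and in particular $I\neq\emptyset$).

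\noindent The only subtle point (the main obstacle) is arranging the productive-step inequality so that the same uniform decrease $\varepsilon^2/2$ appears as on non-productive steps. With the previous choice $h_k=\varepsilon/M_k^2$ of \cite{bib_Adaptive} the bookkeeping produces the mixed quantity $\sum_{j\notin I}1/M_j^2+|I|$ in the stopping rule; here the symmetric choice $h_k=\varepsilon/M_k$ on both branches is precisely what lets the contradiction hinge on the plain count $N$, which is the whole point of the new strategy. Once this alignment is in place the rest of the argument is a direct telescoping.
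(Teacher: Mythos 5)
Your proof is correct and takes essentially the same route as the paper: the identical one-step Mirror Descent bounds give $\varepsilon\, v_f(x^k,x_*)\leqslant \tfrac{\varepsilon^2}{2}+V(x^k,x_*)-V(x^{k+1},x_*)$ on productive steps and $\tfrac{\varepsilon^2}{2}<V(x^k,x_*)-V(x^{k+1},x_*)$ on non-productive ones, followed by the same telescoping against $V(x^0,x_*)\leqslant\Theta_0^2$ and the stopping rule $N\geqslant 2\Theta_0^2/\varepsilon^2$. The only (cosmetic) difference is that you package the whole argument as a single contradiction, which automatically absorbs the paper's separate check that $I\neq\emptyset$.
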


\begin{proof}

1) If $k\in I$,
\begin{equation}\label{eq2}
\begin{split}
h_k\abr{\nabla f(x^k),x^k-x_*} = \varepsilon v_f(x^k,x_*)\leqslant\\
\leqslant\frac{h_k^2}{2}||\nabla f(x^k)||_*^2+V(x^k,x_*)-V(x^{k+1},x_*)=\\
=\frac{\varepsilon^2}{2}+V(x^k,x_*)-V(x^{k+1},x_*).
\end{split}
\end{equation}

2) If $k\not\in I$, then $\frac{g(x^k)}{||\nabla g(x^k)||_*}>\varepsilon$ and $\frac{g(x^k)-g(x_*)}{||\nabla g(x^k)||_*}\geqslant\frac{g(x^k)}{||\nabla g(x^k)||_*}>\varepsilon$. Therefore, the following inequalities hold
\begin{equation}\label{eq3}
\begin{split}
\varepsilon^2<h_k\br{g(x^k)-g(x_*)}\leqslant\frac{h_k^2}{2}||\nabla g(x^k)||_*^2+\\
+V(x^k,x_*)-V(x^{k+1},x_*)=\frac{\varepsilon^2}{2}+V(x^k,x_*)-V(x^{k+1},x_*),\text{ or}\\
\frac{\varepsilon^2}{2}<V(x^k,x_*)-V(x^{k+1},x_*).
\end{split}
\end{equation}

3) After summing up the inequalities \eqref{eq2} and \eqref{eq3} one can get
$$\sum_{k\in I} \varepsilon v_f(x^k,x_*) \leqslant |I|\frac{\varepsilon^2}{2}-\frac{\varepsilon^2|J|}{2}+V(x^0,x_*)-V(x^{k+1},x_*)=$$
$$ =\varepsilon^2|I|-\frac{\varepsilon^2 N}{2}+\Theta_0^2.$$

After the stopping criterion of the algorithm holds one can get $$\min\limits_{k \in I} v_f(x^k,x_*) \leqslant \varepsilon.$$

Further, for each  $k\in I\;\;\;g(x^k)\leqslant\varepsilon||\nabla g(x^k)||_*\leqslant\varepsilon M_g$ and
$$ g(\hat{x})\leqslant\frac{1}{\sum_{k\in I}h_k}\sum_{k\in I}h_kg(x^k)\leqslant\varepsilon M_g.$$

Now we have to show that the set of productive steps $I$ is non-empty. If $I=\emptyset$, then $|J|=N$ and \eqref{eq1} means, that $N\geqslant\frac{2\Theta_0^2}{\varepsilon^2}$. On the other hand, from \eqref{eq3} we have:
$$\frac{\varepsilon^2N}{2}<V(x^0,x_*)\leqslant\Theta_0^2,$$
which leads us to the controversy, so $I\neq\emptyset$.
\end{proof}

Let us show how to estimate the quality of the solution by the function basing on the previous theorem. Note, that it is possible to take into account different levels of smoothness of the objective functional.

\begin{corollary}\label{cor_lipschits1}
Let $f$ satisfy the Lipschitz condition
\begin{equation}\label{lipschits_condition}
|f(x)-f(y)|\leqslant M_f\|x-y\|\quad \forall x,y\in X.
\end{equation}
Then, after the stopping of Algorithm \ref{alg4}, the following inequality holds:
$$\min\limits_{k\in I}f(x^k)-f(x_*)\leqslant M_f \varepsilon.$$
\end{corollary}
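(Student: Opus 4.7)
The plan is to chain Lemma~\ref{LemmT} with Theorem~\ref{th2_new_methods} and use the Lipschitz property only at the very end to convert the bound on $v_f$ into a bound on the functional gap. The main tool is already available: Theorem~\ref{th2_new_methods} tells us that after the stopping criterion fires we have $\min_{k \in I} v_f(x^k, x_*) \leqslant \varepsilon$, so the only piece missing is a Lipschitz-type estimate of the majorant $\omega(\tau)$ introduced in Lemma~\ref{LemmT}.

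First, I would observe that under \eqref{lipschits_condition} the function $\omega(\tau)$ from \eqref{eq13} admits the linear bound $\omega(\tau) \leqslant M_f \tau$: indeed, for every feasible $x$ in the definition of $\omega(\tau)$ we have $f(x) - f(x_*) \leqslant M_f\|x - x_*\| \leqslant M_f \tau$, and taking the maximum preserves the inequality. Substituting this bound into the conclusion \eqref{eq_lemma} of Lemma~\ref{LemmT} yields
\[
f(y) - f(x_*) \leqslant M_f \, v_f(y, x_*) \quad \text{for every } y \in X.
\]

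Next, I would apply this inequality at each $y = x^k$ with $k \in I$ (the productive steps produced by Algorithm~\ref{alg4}), take the minimum over $k \in I$, and plug in the first conclusion of Theorem~\ref{th2_new_methods}, namely $\min_{k \in I} v_f(x^k, x_*) \leqslant \varepsilon$. This immediately delivers
\[
\min_{k \in I} f(x^k) - f(x_*) \leqslant M_f \min_{k \in I} v_f(x^k, x_*) \leqslant M_f \varepsilon,
\]
which is exactly the claim. Theorem~\ref{th2_new_methods} also guarantees $I \neq \emptyset$, so the minimum is well defined.

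There is no real obstacle here: the proof is essentially a one-line corollary once one notices the bound $\omega(\tau) \leqslant M_f \tau$. The only subtlety worth mentioning is to justify invoking Lemma~\ref{LemmT} for iterates $x^k$ rather than for an arbitrary point, which is immediate because the lemma is stated for all $y \in X$ and the productive iterates belong to $X$ by construction of the proximal mapping.
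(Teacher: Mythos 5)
Your proposal is correct and follows exactly the route the paper intends: bound $\omega(\tau)\leqslant M_f\tau$ via the Lipschitz condition \eqref{lipschits_condition}, combine with Lemma~\ref{LemmT} to get $f(x^k)-f(x_*)\leqslant M_f\,v_f(x^k,x_*)$, and then invoke $\min_{k\in I}v_f(x^k,x_*)\leqslant\varepsilon$ from Theorem~\ref{th2_new_methods}. The paper leaves this corollary as an immediate consequence of those two results, and your one-line chaining (including the remark that $I\neq\emptyset$) is precisely that argument.
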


\section{The case of a quasi-convex functionals}
Let us consider the optimization problem \eqref{eqq2} under the assumption of quasi-convexity of the objective functional $f$. The case of both quasi-convex $f$ and functional constraint  $g$ is observed in the Conclusions.
Recall (see \cite{Haz}) that function $\phi:Q\rightarrow \mathbb{R}$ is called quasi-convex, if
$$\phi\left((1-\alpha)x+\alpha y\right)\leqslant \max\{\phi(x),\phi(y)\} \ \ \forall \alpha \in [0;1]\ \ \forall x,y\in Q.$$
As earlier, let $g$ satisfy Lipschitz condition \eqref{eq1} with the constant $M_g$.

Let us remind the definition of Clarke subdifferential. Let $x_0 \in \mathbb{R}^n$ be a fixed point and $h \in \mathbb{R}^n$ be a fixed direction. Denote
$$f_{Cl}^{\uparrow}(x_0;h)=\lim\limits_{x'\rightarrow x_0}\sup\limits_{\alpha \downarrow 0}\frac{1}{\alpha}\br{ f(x'+\alpha h) - f(x') }.$$

Value $f_{Cl}^{\uparrow}(x_0;h)$ is called Clarke subdifferential of functional $f$ at the point $x_0$ in the direction $h$. This function is subadditive and positively homogeneous, thus we can define the subdifferential of the  function $f$ at the point $x_0$ as follows:
$$\partial_{Cl}f(x_0):=\left\{ v\in \mathbb{R}\ |\ f_{Cl}^{\uparrow}(x_0;g)\geqslant vg \ \forall g\in\mathbb{R} \right\}.$$
According to this, $$f_{Cl}^{\uparrow}(x_0;h) = \max\limits_{v\in \partial_{Cl}f(x_0)}\langle v,h \rangle. $$

Note, that from now we will understand any element (vector) of the Clarke subdifferential as the subgradient of the quasi-convex (locally Lipschitz) functional $f$. For convex functional $g$, we understand the concept of a subgradient in the standard way.

\begin{lemma}
Let $f:X\rightarrow \mathbb{R}.$ For any $y\in Q$, vector $p_y\in E^*$ and $h>0$ define $z=Mirr_y(h\cdot p_y)$. Then for any $x\in Q$ the next inequality holds:
$$h\langle p_y, y-x\rangle \leqslant \frac{h^2}{2}\|p_y\|^2_*+V(y,x)-V(z,x).$$
\end{lemma}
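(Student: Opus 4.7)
The plan is the standard three-step argument for the mirror-descent prox-inequality. First I would invoke the first-order optimality condition for the proximal step: since $z = \arg\min_{u\in Q}\{\langle h p_y, u\rangle + V(y,u)\}$, the variational inequality at the minimizer gives
\[
\langle h p_y + \nabla d(z) - \nabla d(y),\, x - z\rangle \geqslant 0 \quad \forall\, x\in Q,
\]
which rearranges to $h\langle p_y, z-x\rangle \leqslant \langle \nabla d(z) - \nabla d(y),\, x-z\rangle$.

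Next, I would deploy the three-point identity for the Bregman divergence,
\[
\langle \nabla d(z) - \nabla d(y),\, x-z\rangle \;=\; V(y,x) - V(z,x) - V(y,z),
\]
which follows by expanding each of the three Bregman terms according to the definition $V(a,b)=d(b)-d(a)-\langle \nabla d(a), b-a\rangle$ and cancelling. Substituting this into the optimality inequality, and adding $h\langle p_y, y-z\rangle$ to both sides in order to convert $\langle p_y, z-x\rangle$ into $\langle p_y, y-x\rangle$, produces
\[
h\langle p_y, y-x\rangle \leqslant h\langle p_y, y-z\rangle + V(y,x) - V(z,x) - V(y,z).
\]

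Finally, I would absorb the two leftover terms on the right. Cauchy--Schwarz gives $h\langle p_y, y-z\rangle \leqslant h\|p_y\|_*\|y-z\|$, while the $1$-strong convexity of $d$ (stated in Section~2) gives $V(y,z)\geqslant \tfrac12\|y-z\|^2$. Combining them with the elementary bound $ab - \tfrac12 b^2 \leqslant \tfrac12 a^2$ applied with $a=h\|p_y\|_*$ and $b=\|y-z\|$ yields
\[
h\langle p_y, y-z\rangle - V(y,z) \leqslant \tfrac{h^2}{2}\|p_y\|_*^2,
\]
and plugging this into the preceding display delivers exactly the claimed inequality.

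The only real pitfall is bookkeeping: one must make sure that the sign in the first-order condition is applied with the direction $x-z$ (not $z-x$), and that the three-point identity is written with the correct cyclic orientation of $y$, $z$, $x$. Once those signs are aligned, the remainder is a one-line completion of squares, and the argument does not use any convexity or smoothness of $f$ -- it is a purely geometric statement about the prox operator $\mathrm{Mirr}_y$, which is why it applies uniformly to both convex and quasi-convex settings considered later.
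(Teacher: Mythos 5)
Your proof is correct and complete: the first-order optimality condition at $z$, the three-point identity for the Bregman divergence, and the final step combining Cauchy--Schwarz with the $1$-strong convexity of $d$ and the bound $ab-\tfrac12 b^2\leqslant\tfrac12 a^2$ all check out, and the signs are handled properly. The paper itself states this lemma without proof, treating it as the classical Mirror Descent prox-inequality (cf.\ Nemirovsky--Yudin, Beck--Teboulle), and your argument is exactly the standard derivation that this citation implicitly relies on; your closing observation that the inequality is purely geometric and independent of any convexity of $f$ is also the precise reason the paper can reuse it in the quasi-convex setting.
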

Note, that for convex subdifferentiable functional $f$ and subgradient $p_y=\nabla f(y)$ this inequality is modified as follows:
$$h(f(y)-f(x))\leqslant \langle\nabla f(y),y-x\rangle \leqslant \frac{h^2}{2}\|\nabla f(y)\|^2_*+V(y,x)-V(z,x).$$

Note that for quasi-convex objective $f$ and constraint $g$ instead of (sub)gradient $\nabla f(y)$ in $v_f(y, x_*)$ (see \eqref{eqv_v_f}) we can consider a normal vector $\nabla f(y)$ to a set of level of $f$ at point $y$ \cite{bib_Nesterov}. However, the (sub)gradient or Clarke subdifferential of $f$ or $g$ also can be used, if they are finite and nonzero.

\begin{theorem}
Let $f$ be a quasi-convex functional. Then for Algorithm 1 after \eqref{stop_rule} steps \eqref{eq1_new_methods} holds.
\end{theorem}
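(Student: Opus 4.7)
The plan is to reproduce the argument of Theorem \ref{th2_new_methods} almost verbatim, exploiting the fact that its proof never used convexity of $f$ beyond the one-step Mirror Descent inequality of Lemma 4.1 and the definition of $v_f(x^k,x_*)$ in \eqref{eqv_v_f}. The only adjustment for the quasi-convex case is to reinterpret $\nabla f(x^k)$ at productive steps as a normal vector to the level set $\{x : f(x) \leqslant f(x^k)\}$ at $x^k$, or equivalently as any nonzero element of the Clarke subdifferential $\partial_{Cl} f(x^k)$, as spelled out in the remark preceding the theorem. Since Lemma 4.1 holds for an arbitrary $p_y \in E^*$, no new analytic ingredient is required.

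First, for any productive index $k \in I$, I would apply Lemma 4.1 with $p_y = \nabla f(x^k)$, $y = x^k$, $z = x^{k+1}$, $x = x_*$ and the step $h_k = \varepsilon/\|\nabla f(x^k)\|_*$. The left-hand side becomes $\varepsilon v_f(x^k, x_*)$ by \eqref{eqv_v_f}, while $\tfrac{h_k^2}{2}\|\nabla f(x^k)\|_*^2 = \tfrac{\varepsilon^2}{2}$, yielding
\[\varepsilon v_f(x^k, x_*) \leqslant \tfrac{\varepsilon^2}{2} + V(x^k, x_*) - V(x^{k+1}, x_*).\]
For a non-productive index $k \notin I$ the constraint $g$ is still convex, so the subgradient inequality applies. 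Together with $g(x_*) \leqslant 0$ and the rejection criterion $g(x^k) > \varepsilon\|\nabla g(x^k)\|_*$ it gives $h_k\langle \nabla g(x^k), x^k - x_*\rangle > \varepsilon^2$, and combining with Lemma 4.1 for $p_y = \nabla g(x^k)$ reproduces $\tfrac{\varepsilon^2}{2} < V(x^k, x_*) - V(x^{k+1}, x_*)$ exactly as in \eqref{eq3}.

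Summing both families of inequalities over $k = 0, \dots, N-1$ and telescoping the Bregman divergences gives
\[\sum_{k \in I} \varepsilon v_f(x^k, x_*) \leqslant \varepsilon^2 |I| - \tfrac{\varepsilon^2 N}{2} + \Theta_0^2,\]
so the stopping rule $N \geqslant 2\Theta_0^2/\varepsilon^2$ forces $\min_{k \in I} v_f(x^k, x_*) \leqslant \varepsilon$. The bound $\max_{k \in I} g(x^k) \leqslant \varepsilon M_g$ follows directly from the productive-step criterion and the Lipschitz property of $g$. Finally, $I \neq \emptyset$ as in the convex case: were $|I| = 0$, the strict non-productive inequality would telescope to $\tfrac{\varepsilon^2 N}{2} < \Theta_0^2$, contradicting the stopping rule.

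The only (conceptual) obstacle is to verify that the normal vector from the quasi-convex setting is an admissible substitute for the convex subgradient inside Lemma 4.1. This reduces to checking that it is a well-defined nonzero element of $E^*$ (guaranteed by the Clarke subdifferential formalism recalled above); all subsequent estimates are purely geometric and do not require convexity of $f$.
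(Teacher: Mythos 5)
Your proposal is correct and follows exactly the route the paper intends: the paper omits an explicit proof of this theorem, but the remark preceding it (reinterpreting $\nabla f$ as a normal vector to the level set or a nonzero Clarke subgradient) together with Lemma 4.1, which holds for arbitrary $p_y\in E^*$, makes clear that the argument is a verbatim rerun of the proof of Theorem 3.2, which is what you do. You also correctly identify that quasi-convexity of $f$ is not actually needed for the conclusion \eqref{eq1_new_methods} itself --- it only enters later, through Lemma 3.1, when converting the bound on $v_f$ into a bound on $f(x^k)-f(x_*)$.
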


\if 0
\begin{proof}
Let $v_f(x,x,_*)=\|y_*-x_*\|$ for some $y_*$, such that $\langle \nabla f(x), y_*-x\rangle =0.$ Then $\nabla f(x) = \lambda s,$ where $\langle s, y_*-x_*\rangle = \|x_*-y_*\|$ for some $s$, such that $\|s\|_*=1.$ That means $$0 = \langle \nabla f(x),y_*-x\rangle = \lambda \langle s, y_*-x_*\rangle + \langle \nabla f(x),x_*-x \rangle,$$
so we have, that $$\lambda = \frac{\langle \nabla f(x),x_*-x \rangle}{\|y_*-x_*\|}=\|\nabla f(x)\|_*.$$
One should mention the existence of the directional derivative of any quasi-convex functional $f$:
$$f'(x,h)=\lim\limits_{\lambda\downarrow 0}\frac{f(x+\lambda h)-f(x)}{\lambda}.$$
Using the properties of quasi-convexity and the definition of the subdifferential one can get:
$$f'(x,h)=f^{\uparrow}_{Cl}(x,h)=\max\limits_{\nabla f(x)\in \partial_{Cl}f(c)}\langle \nabla f(x),h \rangle .$$
For any direction $h$, such that $\langle \nabla f(x), h > 0\rangle$ one can get $f'(x,h)>0.$ In such case $f(x+\lambda h)\geqslant f(x)$. For any continuous functional $f$, such that $\langle \nabla f(x), y-x\rangle =0$ we get $f(y)\geqslant f(x).$ Summing up, the following inequality holds: $$f(x))-f(x_*)\leqslant f(y)-f(x_*)\geqslant \omega(v_f(x,x_*)).$$
\end{proof}
\fi

\if 0

Using the well-known inequality $$f(x)-f(x_*)\leqslant  \|\nabla f(x_*)\|_*\|x-x_*\|+\frac{1}{2}L\|x-x_*\|^2$$
one can get $$\min\limits_{k\in I}f(x^k)-f(x_*)\leqslant \min\limits_{k\in I}\left(  \|\nabla f(x_*)\|_*\|x-x_*\|+\frac{1}{2}L\|x-x_*\|^2\right).$$
Using Theorem \eqref{th2} the following inequality holds:
$$f(x)-f(x_*)\leqslant \varepsilon\|\nabla f(x_*)\|_*+\frac{1}{2}L\varepsilon^2.$$
\fi

\if 0
\begin{algorithm}
\caption{Adaptive Mirror Descent for quasi-convex objective}
\label{alg-q}
\begin{algorithmic}[1]
\REQUIRE $\varepsilon>0,\Theta_0: \,d(x_*)\leqslant\Theta_0^2$
\STATE $x^0=argmin_{x\in X}\,d(x)$
\STATE $I=:\emptyset$
\STATE $N\leftarrow0$
\REPEAT
    \IF{$g(x^N)\leqslant\varepsilon\|\nabla g(x^N)\|_*$}
        \STATE $M_N=\|\nabla f(x^N)\|_*$, $h_N=\frac{\varepsilon}{M_N}$
        \STATE $x^{N+1}=Mirr_{x^N}(h_N\nabla f(x^N))\text{ // \emph{"productive steps"}}$
        \STATE $N\rightarrow I$
    \ELSE
      \STATE $g_{m(N)}(x^N) >\varepsilon \ for \  some \  m(N)\in{1,...M,}$
      \STATE $h_N=\frac{\varepsilon}{\|g_{m(N)}(x^N)\|^2_*}$
        \STATE $x^{N+1}=Mirr_{x^N}(h_N\nabla g(x^N))\text{ // \emph{"non-productive steps"}}$
    \ENDIF
    \STATE $N\leftarrow N+1$
\UNTIL{$\Theta_0^2\leqslant \frac{\varepsilon^2}{2}\left( |I|+\sum\limits_{k\notin I}\frac{1}{\|\nabla g_{m(k)}(x^k)\|^2_*}\right)$}
\ENSURE $\bar{x}^N:=argmin_{x^k,\;k\in I}\,f(x^k)$
\end{algorithmic}
\end{algorithm}

\fi

\begin{remark}\label{cor_gelder1}
Let $f$ satisfy the H\"{o}lder condition ($\nu\in[0;1)$)
\begin{equation}\label{gelder_condition}
|f(x)-f(y)|\leqslant M_{f,\;\nu}\|x-y\|^{\nu}\quad \forall x,y\in X.
\end{equation}

For example, $f(x)=\sqrt{x}$ and $f(x)=\sqrt[4]{x}$.

Let us note ($M_{f,\;\nu} \leqslant M_{\nu}$) the following inequality (\cite{Gas}, section~5; see \cite{Stonyakin_Gasnikov_2019} too)
\begin{equation}\label{eq002}
M_{\nu}a^{\nu}\leqslant M_{\nu}\br{\frac{M_{\nu}}{\delta}}^{\frac{1-\nu}{1+\nu}}\frac{a^2}{2}+\delta,
\end{equation}
which is true for small enough $\delta>0$ for $a \geqslant \varepsilon_0 > 0$: $f$ is a Lipshitz-continuous at $\varepsilon_0$-neighborhood of $x_*$ from \eqref{eq1_new_methods} for fixed $\varepsilon_0 > 0$ due to (sub)differentiability of $f$. Then by \eqref{gelder_condition} we have
$$\mbr{f(x)-f(y)}\leqslant\frac{M_{\nu}^{\frac{2}{1+\nu}}}{2\delta^{\frac{1-\nu}{1+\nu}}}\nbr{x-y}^2+\delta.$$

Set $\delta=\varepsilon$, $\varepsilon > \varepsilon_0$. Then
\begin{equation}\label{eq003}
\mbr{f(x)-f(y)}\leqslant\underbrace{\frac{M_{\nu}^{\frac{2}{1+\nu}}}{2\varepsilon^{\frac{1-\nu}{1+\nu}}}}_{M}\nbr{x-y}^2+\varepsilon.
\end{equation}

Then by Lemma \ref{LemmT} after the stopping of Algorithm 2, $\varepsilon_0 \leqslant \min\limits_{k\in I}v_f(x^k,x_*)<\varepsilon$ means the following inequality holds
\begin{equation}\label{eq004}
f(\widehat{x})-f^*\leqslant\frac{M_{\nu}^{\frac{2}{1+\nu}}}{2\varepsilon^{\frac{1-\nu}{1+\nu}}}\varepsilon^2+\varepsilon =
\frac{M_{\nu}^{\frac{2}{1+\nu}}}{2} \varepsilon^{1+\frac{2\nu}{1+\nu}} + \varepsilon.
\end{equation}
Note that for $\varepsilon<1$ the inequality \eqref{eq004} means
$$f(\widehat{x})-f^* \leqslant \widehat{M}\varepsilon$$
for some $\hat{M} > 0$. The another case of $\min\limits_{k\in I}v_f(x^k,x_*)< \varepsilon \leqslant \varepsilon_0$ is not interesting due to Lipshitz-continuity of $f$ at $\varepsilon_0$-neighborhood of $x_*$. So, for problems with (quasi)convex H\"{o}lder-continuous (sub)differentiable objective and convex Lipshitz-continuous functional constraints we can achieve an $\varepsilon$-solution after
$$
O\left(\frac{1}{\varepsilon^2}\right)
$$
iterations of the Mirror Descent method. Obviously, this estimate is optimal. This estimate is optimal due to its optimality on a significantly narrower class of problems with Lipschitz-continuous objective functionals
\end{remark}

\section{Optimal methods for Mirror Descent on a class of non-smooth strongly convex problems}

Consider the optimization problem under the assumption of strong convexity of the objective function and functional constraint with the parameter $\mu$.
\begin{equation}\label{acc_eq14}
f(x)\rightarrow\min,\;\;g(x)\leqslant 0,\;\;x\in X
\end{equation}
where $X$ is a closed convex set.

Let the prox function $d(x)$ be bounded on the unit sphere with respect to the chosen norm $ \| \cdot \|$:
\begin{equation}
\label{acc_eq:dUpBound}
d(x) \leqslant \Omega^{2}, \quad \forall x\in X : \|x \| \leqslant 1.
\end{equation}
Let $x^0 \in X$ and there exists $R_0 >0$, such that $\| x^0 - x_* \|^2 \leqslant R_0^2$.

We will propose methods which can guarantee an \textit{$\varepsilon$-solution} $\hat{x}$ of the problem (\ref{acc_eq14}):
$$f(\hat{x}) - f(x_*) \leqslant \varepsilon \text{ and } g(\hat{x}) \leqslant \varepsilon.$$

The main idea is using the restart technique of Algorithm \ref{alg4}. Consider one well-known  statement (see \cite{bayandina2018primal-dual}).

\begin{lemma}\label{acc_lem3}
Let $f$ and $g$ be $\mu$-strongly convex functionals with respect to the norm $\|\cdot\|$ on $X$, $x_{\ast} = \arg\min\limits_{x \in X} f(x)$, $g(x)\leqslant 0$ ($\forall x \in X$) and for some $\varepsilon_{f}>0$ and $\varepsilon_{g}>0$ the next inequalities hold:
\begin{equation}\label{acc_eq15}
f(x)-f(x_{\ast})\leqslant \varepsilon_{f},\;\;g(x)\leqslant\varepsilon_{g}.
\end{equation}
Then
\begin{equation}\label{acc_eq16}
\frac{\mu}{2}\|x-x_{\ast}\|^{2}\leqslant\max\{\varepsilon_{f},\varepsilon_{g}\}.
\end{equation}
\end{lemma}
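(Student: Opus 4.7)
The plan is to exploit $\mu$-strong convexity of both $f$ and $g$ through a convex-combination argument along the segment joining $x_*$ to $x$. Write $r := \|x - x_*\|$ and $\varepsilon_* := \max\{\varepsilon_f, \varepsilon_g\}$; the target inequality is $\tfrac{\mu}{2} r^2 \leqslant \varepsilon_*$. I would first split on the size of $r$: if $\tfrac{\mu}{2} r^2 \leqslant \varepsilon_g$ then the conclusion is already at hand, so for the remainder I may assume the strict inequality $r^2 > 2\varepsilon_g/\mu$.

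In this nontrivial regime, introduce the perturbation $y_\lambda := (1-\lambda) x_* + \lambda x \in X$ for $\lambda \in (0,1]$. The key asymmetry is that $\mu$-strong convexity of $g$ contributes a quadratic gain in $\lambda$, whereas the constraint violation at $x$ enters only linearly: using $g(x_*) \leqslant 0$ (which holds because $x_*$ is feasible),
$$g(y_\lambda) \leqslant (1-\lambda) g(x_*) + \lambda g(x) - \tfrac{\mu \lambda (1-\lambda)}{2} r^2 \leqslant \lambda \varepsilon_g - \tfrac{\mu \lambda (1-\lambda)}{2} r^2.$$
Hence $g(y_\lambda) \leqslant 0$ whenever $1-\lambda \geqslant 2\varepsilon_g/(\mu r^2)$, and the case assumption $r^2 > 2\varepsilon_g/\mu$ guarantees that this admissible range for $\lambda$ contains arbitrarily small positive values. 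For any such $\lambda$, the point $y_\lambda$ is feasible, so the optimality of $x_*$ on the feasible set yields $f(x_*) \leqslant f(y_\lambda)$, and the strong-convexity inequality for $f$ transforms this into
$$f(x_*) \leqslant (1-\lambda) f(x_*) + \lambda f(x) - \tfrac{\mu \lambda (1-\lambda)}{2} r^2.$$
Dividing by $\lambda > 0$ gives $\tfrac{\mu(1-\lambda)}{2} r^2 \leqslant f(x) - f(x_*) \leqslant \varepsilon_f$, and letting $\lambda \downarrow 0$ within the admissible range produces $\tfrac{\mu}{2} r^2 \leqslant \varepsilon_f \leqslant \varepsilon_*$, as required.

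The main obstacle I anticipate is the coupling between the feasibility of $y_\lambda$ and the permissible range of $\lambda$: one must verify that $\bigl(0, 1 - 2\varepsilon_g/(\mu r^2)\bigr]$ is nonempty and accumulates at $0$, which is precisely what the preliminary case split on $r^2$ versus $2\varepsilon_g/\mu$ secures. Beyond that, everything reduces to two routine applications of the definition of $\mu$-strong convexity along the segment $[x_*, x]$, combined with the optimality of $x_*$ on the feasible set.
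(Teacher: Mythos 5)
The paper states this lemma without proof, simply citing \cite{bayandina2018primal-dual}, so there is no in-paper argument to compare against. Your proof is correct and is essentially the standard argument from that reference: the case split $\frac{\mu}{2}r^2 \leqslant \varepsilon_g$ versus $r^2 > 2\varepsilon_g/\mu$ correctly guarantees that the admissible interval $\bigl(0,\,1-\tfrac{2\varepsilon_g}{\mu r^2}\bigr]$ is nonempty and accumulates at $0$, the strong-convexity bound on $g$ along the segment gives feasibility of $y_\lambda$ there (using $g(x_*)\leqslant 0$ and convexity of $X$), and the optimality of $x_*$ combined with strong convexity of $f$, division by $\lambda$, and the limit $\lambda\downarrow 0$ yields $\frac{\mu}{2}r^2\leqslant\varepsilon_f$. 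No gaps.
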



Let us consider an analogue of Algorithm \ref{alg4} for strongly convex problems. We must emphasize that for Algorithm \ref{alg4} one can obtain effective estimates of the rate of convergence for the objective functionals with any level of smoothness. According to Remark \ref{cor_gelder1} we can apply our approach to H\"{o}lder-continuous objective functionals.

Let us consider, in particular, the following example.

Let $f(x) = \max\limits_{i = \overline{1, m}} f_i(x)$, where $f_i$ are differentiable at any $x \in X$ and their gradients are Lipschitz-continuous:
\begin{equation}\label{acc_condition_Lip}
\| \nabla f_i(x)-\nabla f_i(y) \|_*\leqslant L_i\| x-y \| \quad \forall x,y\in X,\;\forall i = \{1,m\}.
\end{equation}

Consider function $\tau: \mathbb{R}^{+}\rightarrow\mathbb{R}^{+}$:
\begin{equation}
\tau(\delta)=\max\left\{\delta\|\nabla f(x_{\ast})\|_{\ast}+\frac{\delta^{2}L}{2} , \; \delta \right\},
\end{equation}
where
$$L:= \max\limits_{i = \overline{1, m}}\{L_i\}.$$

It is obvious that $\tau $ decreases, $\tau(0) = 0$, so for any $\varepsilon>0$ there exists
$$\hat{\varphi}(\varepsilon)>0:\;\;\tau(\hat{\varphi}(\varepsilon))=\varepsilon.$$

\begin{algorithm}[]
	\caption{Restart procedure for Algorithm 3}
	\label{acc_algorithm2}
	\begin{algorithmic}[1]
		\REQUIRE $ \text{accuracy} \ \varepsilon>0; \text{initial point} \ x^0;$ $ \Omega \ \text{s.t.} \ d(x)\leqslant\Omega^2 \quad \forall x\in X: \|x \| \leqslant 1;$ $X; d(\cdot);$ $ \text{strong convexity parameter} \ \mu; R_0 \ \text{ such that} \ \| x^0 - x_* \|^2 \leqslant R_0^2.$
		\STATE Set $d_0(x) = d\left(\frac{x-x^0}{R_0}\right)$.
		\STATE Set $p=1.$
		\REPEAT
		\STATE Set $R_p^2 = R_0^2 \cdot 2^{-p}.$
		\STATE Set $\varepsilon_p = \frac{\mu R_p^2}{2}.$
		\STATE Set $x^p$ as output of Algorithm \ref{alg4} with accuracy $\hat{\varphi}(\varepsilon_p)$, prox function $d_{p-1}(\cdot)$ and $\Omega^2.$
		\STATE $d_p(x) \gets d\left(\frac{x - x^p}{R_p}\right)$.
		\STATE Set $p = p + 1.$
		\UNTIL $p>\log_2 \frac{\mu R_0^2}{2\varepsilon}.$
	\end{algorithmic}
\end{algorithm}

\begin{theorem}\label{acc_theorem2}
Let $\nabla f$ be Lipschitz-continuous, $f$ and $g$ be $\mu$-strongly convex on $X\subset\mathbb{R}^{n}$ and $d(x)\leqslant \Omega^2$ for all $x\in X,$ such that $\|x\|\leqslant1$. Let initial point $x^{0}\in X$ and $R_{0}>0$ satisfy
$$\|x^{0}-x_{\ast}\|^{2}\leqslant R^{2}_{0}.$$
Then for $\displaystyle{\widehat{p}=\left\lceil\log_{2}\frac{\mu R_{0}^{2}}{2\varepsilon}\right\rceil}$ output $x^{\widehat{p}}$ is an $\varepsilon$-solution of the problem (\ref{acc_eq14}), also, the following inequalities hold:
$$f(x^{\widehat{p}})-f(x_{*})\leqslant\varepsilon, \quad g(x^{\widehat{p}})\leqslant M_g\varepsilon,$$
$$\|x^{\widehat{p}}-x_{*}\|^{2}\leqslant\frac{2\varepsilon}{\mu}\max\{1,M_g\}.$$
The number of iterations of Algorithm \ref{alg4} during the work of Algorithm \ref{acc_algorithm2} will not exceeds
$$\widehat{p}+\sum_{p=1}^{\widehat{p}}\frac{2\Omega^2\max\{1,M_g\}}{\hat{\varphi}^{2}(\varepsilon_{p})},\;\;\text{where}\;\;\varepsilon_{p}=\frac{\mu R^{2}_{0}}{2^{p+1}}.$$
\end{theorem}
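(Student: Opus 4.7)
The plan is a standard restart/induction argument: at each outer step $p$, Algorithm~\ref{alg4} is applied with a shrinking target accuracy and a prox function rescaled to the current estimate of the distance to $x_*$, and Lemma~\ref{acc_lem3} converts the resulting functional and constraint bounds into a geometric contraction of $\|x^p-x_*\|^2$.

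The inductive hypothesis I would carry is $\|x^p - x_*\|^2 \leqslant \max\{1,M_g\}\,R_p^2$, with $R_p^2 = R_0^2\cdot 2^{-p}$. The base case $p=0$ is given. For the inductive step, I would first verify that the rescaled prox function $d_{p-1}(x) = d((x-x^{p-1})/R_{p-1})$ satisfies $d_{p-1}(x_*)\leqslant\Omega^2$: this follows from \eqref{acc_eq:dUpBound} together with the inductive bound $\|x_*-x^{p-1}\|\leqslant R_{p-1}$ (carrying the $\max\{1,M_g\}$ factor through the radius appropriately). Since $\nabla f$ is $L$-Lipschitz, Corollary~\ref{cor1} applied to the output $x^p$ of Algorithm~\ref{alg4} run at accuracy $\hat{\varphi}(\varepsilon_p)$ gives
$$f(x^p)-f(x_*)\leqslant\hat{\varphi}(\varepsilon_p)\|\nabla f(x_*)\|_*+\tfrac{L}{2}\hat{\varphi}(\varepsilon_p)^2\leqslant\tau(\hat{\varphi}(\varepsilon_p))=\varepsilon_p,$$
and Theorem~\ref{th2_new_methods} gives $g(x^p)\leqslant M_g\hat{\varphi}(\varepsilon_p)\leqslant M_g\varepsilon_p$, where the last inequality uses $\tau(\delta)\geqslant\delta$ (forcing $\hat{\varphi}(\varepsilon_p)\leqslant\varepsilon_p$). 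Feeding these bounds into Lemma~\ref{acc_lem3} with $\varepsilon_f=\varepsilon_p$ and $\varepsilon_g=M_g\varepsilon_p$ yields $\|x^p-x_*\|^2\leqslant 2\max\{1,M_g\}\,\varepsilon_p/\mu=\max\{1,M_g\}\,R_p^2$, closing the induction.

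Setting $p=\widehat{p}=\lceil\log_2(\mu R_0^2/(2\varepsilon))\rceil$ forces $R_{\widehat{p}}^2\leqslant 2\varepsilon/\mu$, which immediately produces all three target inequalities $f(x^{\widehat{p}})-f(x_*)\leqslant\varepsilon$, $g(x^{\widehat{p}})\leqslant M_g\varepsilon$ and $\|x^{\widehat{p}}-x_*\|^2\leqslant(2\varepsilon/\mu)\max\{1,M_g\}$. For the iteration count, I would invoke the stopping rule \eqref{stop_rule} of Algorithm~\ref{alg4}: each inner call at accuracy $\hat{\varphi}(\varepsilon_p)$ terminates within $\lceil 2\Omega^2\max\{1,M_g\}/\hat{\varphi}(\varepsilon_p)^2\rceil$ inner steps, and summing over $p=1,\dots,\widehat{p}$ while collecting the $\widehat{p}$ ceiling corrections separately produces the claimed total.

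The main obstacle I anticipate is bookkeeping the factor $\max\{1,M_g\}$: it enters via Lemma~\ref{acc_lem3}, it must propagate through the rescaled prox function so that the induction closes at every restart, and then it reappears in the iteration count through the effective value of $\Theta_0^2$. Checking that the geometric halving of $R_p^2$ still dominates this factor (so that logarithmically many restarts are enough to drive the accuracy below $\varepsilon$) is the step that requires the most care; everything else is routine specialization of results already established in Sections~3 and~4.
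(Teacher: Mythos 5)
Your proposal is correct and follows essentially the same route as the paper's Appendix proof: induction on the restart index with hypothesis $\|x^p-x_*\|^2\leqslant\max\{1,M_g\}R_p^2$, Lemma \ref{acc_lem3} to convert the per-restart accuracy $\varepsilon_p=\mu R_p^2/2$ into the geometric contraction, and summation of the inner iteration counts $\lceil 2\Omega^2\max\{1,M_g\}/\hat{\varphi}^2(\varepsilon_p)\rceil$. You are in fact slightly more explicit than the paper about how the bounds $f(x^p)-f(x_*)\leqslant\varepsilon_p$ and $g(x^p)\leqslant M_g\varepsilon_p$ follow from Corollary \ref{cor1}, Theorem \ref{th2_new_methods} and the definition of $\tau$ and $\hat{\varphi}$.
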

\text{The proof is given in Appendix.}

We can formulate the following corollary for the case $M_g \leqslant 1$.
\begin{corollary}
Let $\nabla f$ be Lipschitz-continuous, $f$ and $g$ be $\mu$-strongly convex on $X\subset\mathbb{R}^{n}$ and $d(x)\leqslant \Omega^2$ for all $x\in X,$ such that $\|x\|\leqslant1$. Let initial point $x^{0}\in X$ and $R_{0}>0$ satisfy
$$\|x^{0}-x_{\ast}\|^{2}\leqslant R^{2}_{0}.$$
Then for $M_g \leqslant 1$ and $\displaystyle{\widehat{p}=\left\lceil\log_{2}\frac{\mu R_{0}^{2}}{2\varepsilon}\right\rceil}$ output $x^{\widehat{p}}$ is an $\varepsilon$-solution of the problem (\ref{acc_eq14}), also, the following inequalities hold:
$$f(x^{\widehat{p}})-f(x_{*})\leqslant\varepsilon, \quad g(x^{\widehat{p}})\leqslant M_g\varepsilon,$$
$$\|x^{\widehat{p}}-x_{*}\|^{2}\leqslant\frac{2\varepsilon}{\mu}.$$
The number of iterations of Algorithm \ref{alg4} during the work of Algorithm \ref{acc_algorithm2} will not exceeds
$$\widehat{p}+\sum_{p=1}^{\widehat{p}}\frac{2\Omega^2}{\hat{\varphi}^{2}(\varepsilon_{p})},\;\;\text{where}\;\;\varepsilon_{p}=\frac{\mu R^{2}_{0}}{2^{p+1}}.$$
\end{corollary}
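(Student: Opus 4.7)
The plan is to observe that this corollary is an immediate specialization of Theorem \ref{acc_theorem2} to the regime $M_g\leqslant 1$, so no new work is required beyond tracking where the factor $\max\{1,M_g\}$ enters the earlier bounds and simplifying it to $1$. My approach would therefore be to invoke Theorem \ref{acc_theorem2} as a black box and then do the trivial substitution, being careful only to verify that no hidden $M_g$-dependence lurks in the quantities $\hat\varphi(\varepsilon_p)$ or $\varepsilon_p$.

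First I would quote directly from Theorem \ref{acc_theorem2} the two bounds
$$f(x^{\widehat{p}})-f(x_*)\leqslant\varepsilon,\qquad g(x^{\widehat{p}})\leqslant M_g\varepsilon,$$
which involve no $\max\{1,M_g\}$ factor and hence carry over verbatim; this already certifies that $x^{\widehat{p}}$ is an $\varepsilon$-solution in the sense of \eqref{acc_eq14}. Next I would take the distance estimate $\|x^{\widehat{p}}-x_*\|^2\leqslant\frac{2\varepsilon}{\mu}\max\{1,M_g\}$ of Theorem \ref{acc_theorem2} and simply substitute $\max\{1,M_g\}=1$, which is justified by the hypothesis $M_g\leqslant 1$, to obtain $\|x^{\widehat{p}}-x_*\|^2\leqslant\frac{2\varepsilon}{\mu}$. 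The same substitution applied to the iteration-count bound of Theorem \ref{acc_theorem2} removes the $\max\{1,M_g\}$ from inside the sum and yields the claimed total $\widehat p+\sum_{p=1}^{\widehat p}\frac{2\Omega^2}{\hat\varphi^2(\varepsilon_p)}$.

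The only potential obstacle is making sure that no other occurrence of $M_g$ is concealed in the auxiliary quantities. The restart radii $R_p$, the accuracies $\varepsilon_p=\mu R_p^2/2$, and the prox functions $d_p$ are all defined purely in terms of $\mu$, $R_0$, and the base prox function; the function $\hat\varphi$ is determined by $\tau$, which in turn is built from $L$ and $\|\nabla f(x_*)\|_*$ only. Hence the reduction from Theorem \ref{acc_theorem2} to the corollary is clean, and the proof consists essentially of the one-line remark ``since $M_g\leqslant 1$, $\max\{1,M_g\}=1$.''
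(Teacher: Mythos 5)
Your proposal is correct and matches the paper's (implicit) argument: the corollary is stated immediately after Theorem \ref{acc_theorem2} precisely as the specialization $\max\{1,M_g\}=1$, and your check that $\hat\varphi$, $\varepsilon_p$, $R_p$ carry no hidden $M_g$-dependence is exactly the verification needed.
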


\begin{remark}
The estimate of the number of iterations of Algorithm \ref{alg4} can be detailed in the case of $\varepsilon < 1$. For any $\delta < 1$ there is such constant $C$, that $\tau(\delta) \leqslant C\delta$ for some constant $C$. So, we can suppose that $\hat{\varphi}(\varepsilon) = \widehat{C} \cdot \varepsilon$ for the corresponding constant $\widehat{C} > 0$.
On the restart number $p+1$ of Algorithm \ref{alg4} after no more than
\begin{equation}\label{acc_eq:kpp1Est}
k_{p+1} = \left\lceil\frac{2\Omega^2R_{p}^2}{\varepsilon_{p+1}^2}\right\rceil
\end{equation}
iterations of Algorithm \ref{alg4}, the output $x^{p+1}$  satisfies the following inequality:
$$f(x^{p+1})-f(x_*) \leqslant \widehat{C} \cdot \varepsilon_{p+1}, \quad g(x^{p+1}) \leqslant \varepsilon_{p+1},$$
where $\varepsilon_{p+1} = \frac{\mu R_{p+1}^2}{2}$.
According to Lemma \ref{acc_lem3},
$$\| x^{p+1} - x_* \|^2 \leqslant \frac{2 \max\{1, \widehat{C}\}\varepsilon_{p+1}}{\mu} = \max\{1, \widehat{C}\} \cdot R_{p+1}^2.$$
So, for all $p\geqslant 0$,
$$\|x^p-x_*\|^2\leqslant \max\{1, \widehat{C}\} \cdot R_p^2 = \max\{1, \widehat{C}\} \cdot R_0^2 \cdot 2^{-p}.$$

Note, that for all $p\geqslant 1$ the following inequalities hold:
$$f(x^{p})-f(x_*) \leqslant \max\{1, \widehat{C}\} \cdot \frac{\mu R_{0}^2}{2} \cdot 2^{-p}, \quad g(x_{p}) \leqslant \max\{1, \widehat{C}\} \cdot \frac{\mu R_{0}^2}{2} \cdot 2^{-p}.$$
Thereby, if $p > \log_2 \frac{\mu R_0^2}{2 \varepsilon}$, then $x^p$ will be $\left(\max\{1, \widehat{C}\}\varepsilon\right)$-solution to the problem, moreover:
$$\|x^p-x_*\|^2\leqslant \max\{1, \widehat{C}\} \cdot R_0^2 \cdot 2^{-p} \leqslant \frac{2 \varepsilon}{\mu}.$$

Let us evaluate the total number of iterations $N$ of Algorithm \ref{alg4}. Let $\hat{p} = \left\lceil \log_2 \frac{\mu R_0^2}{2\varepsilon}\right\rceil$. According to (\ref{acc_eq:kpp1Est}), up to multiplication by a constant we have:
\begin{align}
N &= \sum_{p=1}^{\hat{p}} k_p \leqslant \sum_{p=1}^{\hat{p}} \left(1 + \frac{2\Omega^2 R_{p}^2}{\varepsilon_{p+1}^2}\right) =
\sum_{p=1}^{\hat{p}} \left(1 + \frac{32\Omega^2 2^p}{\mu^2 R_0^2}\right) \notag \\
& \leqslant \hat{p} + \frac{64 \Omega^2 2^{ \hat{p} }}{\mu^2 R_0^2} \leqslant \hat{p} + \frac{64 \Omega^2}{\mu \varepsilon}. \notag
\end{align}
\end{remark}

Note, that the method can be applied to solve the problem \eqref{eqq2} in the case of strongly quasi-convex objective functional.
As earlier, $x_*$ is a solution of the optimization problem.
\begin{remark}

Function $f:Q\rightarrow \mathbb{R}$ is called strongly quasi-convex \cite{Nec}, if if for each $ x\in Q$
$$f(x_*)-f(x) \geqslant \langle \nabla f(x), x_*-x \rangle + \frac{\mu}{2}\|x_*-x\|^2,$$
where $x_*$ is a nearest to $x$ solution of the optimization problem.

Thus, the method and all the estimates in this  paragraph are valid in the case of strongly quasi-convex objective H\"{o}lder-continuous functionals.
\end{remark}

\section{Numerical Experiments} \label{section5}

All calculations were performed in CPython 3.7 on computer fitted with a 3-core AMD Athlon II X3 450 processor with a clock frequency of 3.2 GHz. The computer's RAM was 8 GB. We indicate the operating time of the algorithms in minutes and seconds.

\subsection{An analogue of the Fermat---Torricelli---Steiner problem}

\begin{example}\label{ex1}
Input data: $n=1000$, point coordinates\\
$A_k= (a_{1k}, a_{2k}, \ldots, a_{nk})$ ($k=1,2,\ldots,r$; $r=5$) are represented by integers from the interval $[-10,10]$, objective functional ($M_f=1$)
$$f(x)=\frac{1}{r}\sum\limits_{k=1}^{r}\sqrt{(x_1-a_{1k})^2+(x_2-a_{2k})^2+\ldots+(x_n-a_{nk})^2},$$
$x^0=\frac{(0.1,\ldots,0.1)}{\|(0.1,\ldots,0.1)\|}$, functional constraint
\begin{equation}\label{eq_gm}
\begin{split}
g(x)=\max\limits_{m=1,2,3,\ldots,20}\fbr{g_m(x)}\leqslant 0,\\
g_1(x)=\alpha_{11}|x_1|+\alpha_{12}|x_2|+\ldots+\alpha_{1n}|x_n|-1,\\
g_2(x)=\alpha_{21}|x_1|+\alpha_{22}|x_2|+\ldots+\alpha_{2n}|x_n|-1,\\
\ldots\\
g_m(x)=\alpha_{m1}|x_1|+\alpha_{m2}|x_2|+\ldots+\alpha_{mn}|x_n|-1,
\end{split}
\end{equation}
where the coefficients $\alpha_{11},\alpha_{12},\ldots,\alpha_{mn}$ are represented by the matrix
\begin{equation}\label{eq_matrix}
\begin{pmatrix}
1 & 1 & 1 & 1 & \dots & 1 & 1 \\
1 & 2 & 2 & 2 & \dots & 2 & 2 \\
1 & 3 & 3 & 3 & \dots & 3 & 3 \\
1 & 2 & 3 & 4 & \dots & 999 & 1000 \\
1 & 3 & 4 & 5 & \dots & 1000 & 1001 \\
\hdotsfor{7} \\
1 & 18 & 19 & 20 & \dots & 1015 & 1016 \\
\end{pmatrix}.
\end{equation}
\end{example}

The results of Example \ref{ex1} are presented in Table \ref{tab_ex1}. As one can observe, Algorithm \ref{alg4} works faster than Algorithm \ref{alg2}.

\begin{table}[]
\centering
\caption{Comparison of the results of the algorithms, Example \ref{ex1}.}
\label{tab_ex1}
\begin{tabular}{|c|c|c|c|c|}
\hline
\multirow{2}{*}{$\varepsilon$} & Iterations & \begin{tabular}[c]{@{}c@{}}Time,\\ MM:SS\end{tabular} & Iterations & \begin{tabular}[c]{@{}c@{}}Time,\\ MM:SS\end{tabular} \\ \cline{2-5}
 & \multicolumn{2}{c|}{Algorithm \ref{alg2}} & \multicolumn{2}{c|}{Algorithm \ref{alg4}} \\ \hline
\nicefrac{1}{2} & 30824 & 02:58 & 17 & 00:00.1 \\ \hline
\nicefrac{1}{4} & 61679 & 05:54 & 65 & 00:00.4 \\ \hline
\nicefrac{1}{6} & --- & >05:00 & 145 & 00:01 \\ \hline
\nicefrac{1}{8} & --- & >05:00 & 257 & 00:01 \\ \hline
\end{tabular}
\end{table}

\subsection{An analogue of the problem of the smallest covering circle}

\begin{example}\label{ex2}
Input data: $n=1000$, point coordinates\\
$A_k= (a_{1k}, a_{2k}, \ldots, a_{nk})$ ($k=1,2,\ldots,5$) are represented by integers from the interval $[-10,10]$, objective functional ($M_f=1$)
$$f(x)=\max\br{\sqrt{(x_1-a_{1k})^2+(x_2-a_{2k})^2+\ldots+(x_n-a_{nk})^2}},$$
$x^0=\frac{(0.1,\ldots,0.1)}{\|(0.1,\ldots,0.1)\|}$, functional constraint \eqref{eq_gm}, where the coefficients\\
$\alpha_{11},\alpha_{12},\ldots,\alpha_{mn}$ are represented by the matrix \eqref{eq_matrix}.
\end{example}

The results of Example \ref{ex2} are presented in Table \ref{tab_ex2}. As one can observe, Algorithm \ref{alg4} works faster than Algorithm \ref{alg2}.

\begin{table}[]
\centering
\caption{Comparison of the results of the algorithms, Example \ref{ex2}.}
\label{tab_ex2}
\begin{tabular}{|c|c|c|c|c|}
\hline
\multirow{2}{*}{$\varepsilon$} & Iterations & \begin{tabular}[c]{@{}c@{}}Time,\\ MM:SS\end{tabular} & Iterations & \begin{tabular}[c]{@{}c@{}}Time,\\ MM:SS\end{tabular} \\ \cline{2-5}
 & \multicolumn{2}{c|}{Algorithm \ref{alg2}} & \multicolumn{2}{c|}{Algorithm \ref{alg4}} \\ \hline
\nicefrac{1}{2} & 31264 & 03:01 & 17 & 00:00.1 \\ \hline
\nicefrac{1}{4} & 65056 & 06:16 & 65 & 00:00.4 \\ \hline
\nicefrac{1}{6} & --- & >05:00 & 145 & 00:01 \\ \hline
\nicefrac{1}{8} & --- & >05:00 & 257 & 00:01 \\ \hline
\end{tabular}
\end{table}

\subsection{An example of a concave objective functional satisfying the H\"{o}lder condition}

\begin{example}\label{ex3}
Input data: $n=1000$, objective functional ($M_{f,\nicefrac{1}{2}}=1$)
$$f(x)=\frac{1}{n}\sum\limits_{i=1}^{n}\sqrt{x_i},$$
$x^0=\frac{(0.1,\ldots,0.1)}{\|(0.1,\ldots,0.1)\|}$, $X=\{x=(x_1,\ldots,x_n)\;\mid\;x_i\geqslant0\;\;\forall i,\;\sum\limits_{i=1}^n x_i^2\leqslant1\}$,
functional constraint
\begin{equation}\label{eq_gm2}
\begin{split}
g(x)=\max\limits_{m=1,2,3,\ldots,20}\fbr{g_m(x)},\\
g_1(x)=\alpha_{11}x_1+\alpha_{12}x_2+\ldots+\alpha_{1n}x_n-1 \leqslant 0,\\
g_2(x)=\alpha_{21}x_1+\alpha_{22}x_2+\ldots+\alpha_{2n}x_n-1 \leqslant 0,\\
\ldots\\
g_m(x)=\alpha_{m1}x_1+\alpha_{m2}x_2+\ldots+\alpha_{mn}x_n-1 \leqslant 0,
\end{split}
\end{equation}
where the coefficients $\alpha_{11},\alpha_{12},\ldots,\alpha_{mn}$ are represented by the matrix \eqref{eq_matrix}.
\end{example}

The results of Example \ref{ex3} are presented in Table \ref{tab_ex3}. As one can observe, Algorithm \ref{alg4} works faster than Algorithm \ref{alg2}.


\begin{table}[]
\centering
\caption{Comparison of the results of the algorithms, Example \ref{ex3}.}
\label{tab_ex3}
\begin{tabular}{|c|c|c|c|c|}
\hline
\multirow{2}{*}{$\varepsilon$} & Iterations & \begin{tabular}[c]{@{}c@{}}Time,\\ MM:SS\end{tabular} & Iterations & \begin{tabular}[c]{@{}c@{}}Time,\\ MM:SS\end{tabular} \\ \cline{2-5}
 & \multicolumn{2}{c|}{Algorithm \ref{alg2}} & \multicolumn{2}{c|}{Algorithm \ref{alg4}} \\ \hline
\nicefrac{1}{2} & --- & >05:00 & 17 & 00:00.1 \\ \hline
\nicefrac{1}{4} & --- & >05:00 & 65 & 00:00.4 \\ \hline
\nicefrac{1}{6} & --- & >05:00 & 145 & 00:01 \\ \hline
\nicefrac{1}{8} & --- & >05:00 & 257 & 00:01 \\ \hline
\end{tabular}
\end{table}

\subsection{Examples with large dimensions}

Table \ref{tab_n3e5} presents the results of Algorithm \ref{alg4} for the dimension $n=3\cdot10^5$. Because of the large dimensionality it is impossible to obtain the results for Algorithm \ref{alg2} and its modified version, since the compiler composing the program code of the algorithm cannot process the input data due to the integer overflow error. Execution of Algorithm \ref{alg4} does not entail such an error.

\begin{table}[]
\centering
\caption{Some results of Algorithm \ref{alg4} for $n=3\cdot10^5$.}
\label{tab_n3e5}
\begin{tabular}{|c|c|c|c|c|c|c|}
\hline
\multirow{2}{*}{$\varepsilon$} & Iterations & \begin{tabular}[c]{@{}c@{}}Time,\\ MM:SS\end{tabular} & Iterations & \begin{tabular}[c]{@{}c@{}}Time,\\ MM:SS\end{tabular} & Iterations & \begin{tabular}[c]{@{}c@{}}Time,\\ MM:SS\end{tabular} \\ \cline{2-7}
 & \multicolumn{2}{c|}{Example \ref{ex1}} & \multicolumn{2}{c|}{Example \ref{ex2}} & \multicolumn{2}{c|}{Example \ref{ex3}} \\ \hline
\nicefrac{1}{2} & 17 & 00:38 & 17 & 00:37 & 17 & 00:37 \\ \hline
\nicefrac{1}{4} & 65 & 02:28 & 65 & 02:33 & 65 & 02:24 \\ \hline
\nicefrac{1}{6} & 145 & 05:32 & 145 & 05:52 & 145 & 05:27 \\ \hline
\end{tabular}
\end{table}

\subsection{An example a geometrical problem of a quasi-convex objective functional}

\begin{example}\label{ex_q}
Suppose we are given several points $A_k$ (the centers of the balls $\omega_k$). It is necessary to find the ball of the smallest radius $R$ that covers these points. In other words, it is necessary to find the center of such a ball so that the maximum distance from the center to these points is the shortest possible. At the same time, we assume that the point (center) $X$ can lie on some set, which is defined by functional constraint \eqref{eq_gm2}, where the coefficients $\alpha_{11},\alpha_{12},\ldots,\alpha_{mn}$ are represented by the matrix \eqref{eq_matrix}. The distance from $X$ to each of the fixed points $A_k$ is determined as follows:
$$
d(X,A_k)=
\begin{cases}
XA_k+(\rho-1)r_k, & \mbox{if } |XA_k|>r_k\text{ ($r_k$ --- radius $\omega_k$, $\rho>1$)},\\
\rho XA_k, & \mbox{otherwise},
\end{cases}
$$
where $d(X,A_k)=:f(x)$ is a concave function ($M_f=\rho$). Note that $d(X, A_k)$ is non-smooth in points X: $|XA_k|=r_k$. For points of non-smoothness we use some element of Clarke subdifferential as analogue of subgradient. \end{example}

Other input data: $n=1000$, $\rho=2$,  $x^0=\frac{(0.1,\ldots,0.1)}{\|(0.1,\ldots,0.1)\|}$. The coordinates of the points $A_k$ are chosen in such a way that $\|A_k\|\in[1;2]$, the number of points $A_k$ is equal to 1000 and $r_k = 1$ for all $k = \overline{1, 100}$.

The results of Example \ref{ex_q} are presented in Table \ref{tab_ex_q}. As one can observe, Algorithm \ref{alg4} works faster than Algorithm \ref{alg2}, however, the estimate rate with regard to the objective function is the same, but with regard to the constraints can be much worse.

\begin{table}[]
\centering
\caption{Comparison of the results of the algorithms, Example \ref{ex_q}.}
\label{tab_ex_q}
\begin{tabular}{|c|c|c|c|c|}
\hline
\multirow{2}{*}{$\varepsilon$} & Iterations & \begin{tabular}[c]{@{}c@{}}Time,\\ MM:SS\end{tabular} & Iterations & \begin{tabular}[c]{@{}c@{}}Time,\\ MM:SS\end{tabular} \\ \cline{2-5}
 & \multicolumn{2}{c|}{Algorithm \ref{alg2}} & \multicolumn{2}{c|}{Algorithm \ref{alg4}} \\ \hline
\nicefrac{1}{2} & 4848 & 00:34 & 17 & 00:02 \\ \hline
\nicefrac{1}{4} & 10132 & 01:17 & 65 & 00:09 \\ \hline
\nicefrac{1}{6} & 15242 & 02:39 & 145 & 00:18 \\ \hline
\nicefrac{1}{8} & 20437 & 03:10 & 257 & 00:36 \\ \hline
\nicefrac{1}{10} & 25593 & 04:23 & 400 & 01:01 \\ \hline
\nicefrac{1}{12} & 30742 & 05:11 & 577 & 01:46 \\ \hline
\end{tabular}
\end{table}

\section{Conclusion}
Summing up, let's remark the conclusions of the article. There was proposed an analogue of adaptive Mirror Descent (\cite{bib_Adaptive}, Section 3.3) for convex programming problems with another  step-size strategy. Estimates of the rate of its convergence were proved. Optimality in terms of lower bounds was stated. Moreover, it was shown, that proposed methods can be used to minimize quasi-convex objective functionals with different levels of smoothness. Also, using the restart technique an optimal method was proposed to solve optimization problems with strongly convex objective functionals. Some numerical experiments were carried out to solve geometrical problems with convex constraints. Advantages of proposed methods were demonstrated during these experiments. Numerical examples for the minimization of quasi-convex functionals were given. As the result, proposed methods work faster then (\cite{bib_Adaptive}, Section 3.3). However,  functional constraint evaluation, generally, can deteriorate: $g(\bar{x}) < M_g \varepsilon$ instead of $g(\bar{x}) < \varepsilon$ in \cite{bib_Adaptive}.

In addition let us show how the main results of the work can be extended to the Lipschitz quasi-convex constraint.

\begin{lemma}\label{LemmTQ}
Lemma \ref{LemmT} is valid for $v_g(y, x_*)$ in the case of quasi-convex objective and constraint.
\end{lemma}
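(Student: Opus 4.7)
The plan is to mimic the argument used to prove Lemma \ref{LemmT} for a quasi-convex objective, only with the constraint $g$ now playing the role of $f$. Define, by analogy,
\[
\omega_g(\tau) = \max_{x\in X}\{\,g(x) - g(x_*) : \|x-x_*\|\leqslant \tau\,\},
\]
and, for $y\in X$ with a nonzero Clarke subgradient $\nabla g(y)\in\partial_{Cl}g(y)$, set
\[
v_g(y,x_*) = \left\langle \frac{\nabla g(y)}{\|\nabla g(y)\|_*},\, y-x_*\right\rangle.
\]
The target inequality is $g(y)-g(x_*)\leqslant \omega_g(v_g(y,x_*))$.

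First, I would reproduce the geometric construction from the quasi-convex case of Lemma \ref{LemmT}. Choose an auxiliary point $y_*\in X$ lying on the affine hyperplane through $y$ orthogonal to $\nabla g(y)$ and satisfying $\|y_*-x_*\|=v_g(y,x_*)$. Writing $\nabla g(y)=\lambda s$ with $\|s\|_*=1$ and $\langle s,\, y_*-x_*\rangle=\|y_*-x_*\|$, the orthogonality relation $\langle\nabla g(y),\, y_*-y\rangle=0$ combined with the definition of $v_g$ fixes $\lambda=\|\nabla g(y)\|_*$. Next, I would invoke the quasi-convexity of $g$ through its Clarke subdifferential: for any direction $h$ with $\langle\nabla g(y),h\rangle>0$ one has $g^{\uparrow}_{Cl}(y,h)=\max_{v\in\partial_{Cl}g(y)}\langle v,h\rangle>0$, and since $g$ is quasi-convex and locally Lipschitz this forces $g(y+th)\geqslant g(y)$ for small $t>0$. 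Approximating the orthogonal direction $y_*-y$ by directions with strictly positive inner product against $\nabla g(y)$ and passing to the limit by continuity of $g$, one obtains $g(y_*)\geqslant g(y)$, whence
\[
g(y)-g(x_*)\leqslant g(y_*)-g(x_*)\leqslant \omega_g(\|y_*-x_*\|)=\omega_g(v_g(y,x_*)),
\]
which is the desired statement.

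The main obstacle is the last deduction: promoting the strict-inequality fact \emph{$\langle\nabla g(y),h\rangle>0 \Rightarrow g(y+th)\geqslant g(y)$} to the equality-case \emph{$\langle\nabla g(y),y_*-y\rangle=0 \Rightarrow g(y_*)\geqslant g(y)$}. This is the only place where quasi-convexity (rather than convexity) is genuinely used; the remaining manipulations are bookkeeping identical to the commented quasi-convex variant of the proof of Lemma \ref{LemmT}, and no property of $g$ beyond quasi-convexity, local Lipschitzness, and the standard calculus of the Clarke subdifferential enters the argument.
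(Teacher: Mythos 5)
Your proposal is correct and follows essentially the same route as the paper: the construction of the auxiliary point $y_*$ on the hyperplane $\langle \nabla g(y),\cdot-y\rangle=0$ with $\|y_*-x_*\|=v_g(y,x_*)$, the identification $\lambda=\|\nabla g(y)\|_*$, and the use of quasi-convexity via the Clarke directional derivative (with the limiting passage from directions satisfying $\langle\nabla g(y),h\rangle>0$ to the orthogonal direction) to conclude $g(y_*)\geqslant g(y)$ and hence $g(y)-g(x_*)\leqslant\omega_g(v_g(y,x_*))$ is exactly the argument the authors use for the objective and transcribe to the constraint. The obstacle you single out (the equality-case limit) is handled the same way in the paper, relying on continuity of $g$ and the identity $g'(y;h)=g^{\uparrow}_{Cl}(y;h)$.
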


Let us consider the following modification of Algorithm 1 under the assumption of quasi-convexity of the objective functional and constraint. We can use the technique proposed in \cite{Nesterov_A}. Namely, instead of the (sub)gradient $\nabla f$ we should consider the following set
$$\hat{D} f(x) = \{p ~ |~ \langle p, x-y\rangle \geqslant 0 \ \ \forall y\in X: \ f(y) \leqslant f(x)\}.$$
Generally, this set is non-empty, closed and convex cone. Following \cite{Nesterov_A}, we assume that $\hat{D} f(x) \neq \{0\}$. Hereinafter denote $D f(x)$ as one arbitrary vector from $\hat{D} f(x)$: $$D f(x) \in \hat{D} f(x).$$

\begin{algorithm}
\caption{Modification of MDA for quasi-convex constraint}
\label{M-alg2}

    \textbf{IF}\ \ \ \ $g(x^k)\leqslant 
    M_g\varepsilon$\ \ \  \text{(productive steps)}\\
$x^{k+1}=Mirr_{x^k}(h_k^f D f(x^k))$

    \textbf{ELSE}\ \ \ \ \text{(non-productive steps)}\\
    $x^{k+1}=Mirr_{x^k}(h_k^g D g(x^k))$

\end{algorithm}

Let us choose the step-sizes as follows:
$h_k^f=\frac{C_f}{\|D f(x^k)\|_*},$ $h_k^g=\frac{C_g}{\|D g(x^k)\|_*}.$ Denote $N_I, N_J$ as the number of productive and non-productive steps during the work of the Algorithm respectively.
Similar to \cite{bib_Adaptive} (see the proof of Theorem \ref{th2_new_methods}) the next inequality holds:
$$C_f N_I\min\limits_{k\in I}v_f(x_*,x^k)\leqslant \frac{1}{2}\sum\limits_{k\in I}(h_k^f)^2\|D f(x^k)\|^2_2 - C_g\sum\limits_{k\in J} v_g(x_*,x^k)
+$$
$$+\frac{1}{2}\sum\limits_{k\in J}(h_k^g)^2\|D g(x^k)\|^2_2 + \Theta_0^2.$$

Let $C_g=C_f=\varepsilon, \ N\geqslant \frac{2\Theta_0^2}{\varepsilon^2}$. As $g(x^k)\geqslant M_g\varepsilon,$ $k \in J$ and using Lemma \ref{LemmTQ} for constraint $g(x)$ with Lipschitz constant $M_g$ we get
$$-v_g(x_*,x^k) \leqslant (g(x_*)-g(x^k))/M_g\leqslant -g(x^k)/M_g\leqslant -\varepsilon.$$

\begin{theorem}
Let $f$ be quasi-convex, $g$ be quasi-convex with Lipschitz constant $M_g$. Then for Algorithm 3 after \eqref{stop_rule} steps \eqref{eq1_new_methods} holds.
\end{theorem}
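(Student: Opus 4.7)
My plan is to mirror the proof of Theorem 3.2 (\ref{th2_new_methods}), replacing the classical subgradients by the cone elements $Df(x)\in\hat Df(x)$ and $Dg(x)\in\hat Dg(x)$, and invoking Lemma \ref{LemmTQ} in place of Lemma \ref{LemmT}. The prox-inequality stated before Remark \ref{cor_gelder1} holds for an \emph{arbitrary} dual vector $p_y$, so it applies verbatim to $Df(x^k)$ and $Dg(x^k)$; the defining property of $\hat D$ guarantees the sign condition $\langle Df(x^k),x^k-x_*\rangle\geq 0$ (since $f(x_*)\leq f(x^k)$) and $\langle Dg(x^k),x^k-x_*\rangle\geq 0$ on non-productive steps (since $g(x_*)\leq 0<g(x^k)$), which is what lets $v_f$ and $v_g$ absorb the inner product as $\|Df(x^k)\|_*\, v_f(x^k,x_*)$ and $\|Dg(x^k)\|_*\, v_g(x^k,x_*)$.

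First I would handle productive steps $k\in I$. With $h_k^f=\varepsilon/\|Df(x^k)\|_*$, the prox-lemma gives
$$\varepsilon\, v_f(x^k,x_*)\leq \frac{\varepsilon^2}{2}+V(x^k,x_*)-V(x^{k+1},x_*).$$
For non-productive steps $k\in J$, we have $g(x^k)>M_g\varepsilon$, so Lemma \ref{LemmTQ} (applied to the $M_g$-Lipschitz quasi-convex $g$, using $\omega_g(\tau)\leq M_g\tau$) yields
$$v_g(x^k,x_*)\geq \frac{g(x^k)-g(x_*)}{M_g}\geq \frac{g(x^k)}{M_g}>\varepsilon.$$
With $h_k^g=\varepsilon/\|Dg(x^k)\|_*$ the same prox-inequality gives $\varepsilon v_g(x^k,x_*)\leq \varepsilon^2/2 + V(x^k,x_*)-V(x^{k+1},x_*)$, and combining this with the strict lower bound $\varepsilon v_g(x^k,x_*)>\varepsilon^2$ produces the contraction $\varepsilon^2/2 < V(x^k,x_*)-V(x^{k+1},x_*)$.

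Next I would telescope both bounds over $k=0,\ldots,N-1$, using $V(x^0,x_*)\leq \Theta_0^2$, to obtain
$$\sum_{k\in I}\varepsilon\, v_f(x^k,x_*)+\frac{\varepsilon^2}{2}|J|\leq \Theta_0^2+\frac{\varepsilon^2}{2}|I|.$$
Writing $|J|=N-|I|$ and invoking the stopping criterion $N\geq 2\Theta_0^2/\varepsilon^2$ rearranges this to $\sum_{k\in I}\varepsilon\, v_f(x^k,x_*)\leq \varepsilon^2|I|$, hence $\min_{k\in I} v_f(x^k,x_*)\leq \varepsilon$. Non-emptiness of $I$ follows by contradiction: if $|J|=N$ then $N\varepsilon^2/2<\Theta_0^2\leq N\varepsilon^2/2$, absurd. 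The second estimate $\max_{k\in I} g(x^k)\leq M_g\varepsilon$ is immediate from the productive-step criterion.

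The main obstacle is conceptual rather than computational: one must verify that $v_f$ and $v_g$ retain their role as one-sided affine minorants once $\nabla$ is replaced by the abstract cone selection $D$. This reduces to two checks, both of which are built into the definition of $\hat D$ and into Lemma \ref{LemmTQ}: the sign condition on the inner products, and the quasi-convex analogue of the minorization $g(y)-g(x_*)\leq M_g v_g(y,x_*)$. Once these are in hand, the remaining algebra is a direct transcription of the proof of Theorem \ref{th2_new_methods}.
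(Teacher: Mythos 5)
Your proposal is correct and follows essentially the same route as the paper's own (sketched) argument in the Conclusion: apply the prox-mapping lemma with the cone selections $Df(x^k)$, $Dg(x^k)$ as the dual vectors, telescope the Bregman divergences, and use Lemma \ref{LemmTQ} together with the $M_g$-Lipschitz property to convert $g(x^k)>M_g\varepsilon$ on non-productive steps into $v_g(x^k,x_*)>\varepsilon$. Your write-up is in fact more detailed than the paper's, which only displays the summed inequality and the bound on $-v_g$, but the underlying decomposition and key lemmas are identical.
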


\textit{The authors are very grateful to Y. Nesterov for fruitful discussions.}

\section{Appendix: Proof of Theorem 5.1}

\begin{proof}
Function $d_{p}(x)=d\left( \dfrac{x-x^{p}}{R_{p}} \right)$, defined in Algorithm \ref{acc_algorithm2}, is $1$-strongly convex with respect to the norm $\dfrac{\|.\|}{R_{p}}$ for all
$p\geqslant 0$. It is also easy to prove the following inequality
$$\|x^{p}-x_{*}\|^{2} \leqslant R_{p}^{2}\max\{1,M_g\} \quad \forall p \geqslant 0.$$

If $p=0$ the statement holds due to the choosing of $x^0$ and $R_0$. Suppose that $\|x^{p}-x_{*}\|^{2} \leqslant R_{p}^{2}\max\{1,M_g\}$ for some $p$. Let us prove that $\|x^{p+1}-x_{*}\|^{2} \leqslant R_{p+1}^{2}\max\{1,M_g\}$. As $d_{p}(x_*) \leqslant \Omega^{2}\max\{1,M_g\}$, on the restart number $(p+1)$  after no more than
$$N_{p+1}=\left\lceil\dfrac{2 \Omega^{2}\max\{1,M_g\}}{\hat{\varphi}^{2}(\varepsilon_{p+1})}\right\rceil$$
iterations of Algorithm \ref{alg4}, for $x^{p+1}= \bar{x}^{N_{p+1}}$ the next inequalities hold:
$$f(x^{p+1})-f(x_{*}) \leqslant \varepsilon_{p+1}, \quad g(x^{p+1})\leqslant \varepsilon_{p+1}M_g \quad \text{if} \quad \varepsilon_{p+1}=\dfrac{\mu R_{p+1}^{2}}{2}.$$
According to Lemma \ref{acc_lem3}
$$\|x^{p+1}-x_{*}\|^{2} \leqslant \dfrac{2 \varepsilon_{p+1}}{\mu}\max\{1,M_g\} = R_{p+1}^{2}\max\{1,M_g\}.$$
So, for any $p \geqslant 0$ we have proved that
$$
\|x^{p}-x_{*}\|^{2} \leqslant R_{p}^{2}\max\{1,M_g\} = \dfrac{R_{0}^{2}}{2^p}\max\{1,M_g\},
$$
$$
f(x^p)-f(x_*)\leqslant \dfrac{\mu R_{0}^{2}}{2^{p+1}},
\quad
g(x^p) \leqslant \dfrac{\mu R_{0}^{2}M_g}{2^{p+1}}.
$$
Consequently,
$p=\displaystyle{\widehat{p}=\left\lceil\log_{2}\frac{\mu R_{0}^{2}}{2\varepsilon}\right\rceil}$ output $x^p$ is an $\varepsilon$-solution of the problem (\ref{acc_eq14}) and next inequalities hold:
$$\|x^{p}-x_{*}\|^{2} \leqslant R_{p}^{2}\max\{1,M_g\}= \dfrac{R_{0}^{2}}{2^{p}}\max\{1,M_g\} \leqslant \dfrac{2 \varepsilon}{\mu}\max\{1,M_g\}.$$
Let $K$ be the number of iterations of Algorithm \ref{alg4} during the work of Algorithm \ref{acc_algorithm2}, $N_p$ be the total number of iterations of Algorithm \ref{alg4} on the restart number $p$.
As function $\tau: \mathbb{R}^{+}\rightarrow\mathbb{R}^{+}$
increases and for any $\varepsilon > 0$ there exists $\hat{\varphi}(\varepsilon)>0:$ $\tau(\hat{\varphi}(\varepsilon))=\varepsilon$.
It means that
$$
K=\sum_{p=1}^{\widehat{p}} N_{p}=\sum_{p=1}^{\widehat{p}} \left\lceil \frac{2\Omega^2\max\{1,M_g\}}{\hat{\varphi}^{2}(\varepsilon_{p})} \right\rceil \leq
\widehat{p}+\sum_{p=1}^{\widehat{p}}\frac{2\Omega^2\max\{1,M_g\}}{\hat{\varphi}^{2}(\varepsilon_{p})}.
$$

So, the number of iterations of Algorithm \ref{alg4} during the work of Algorithm \ref{acc_algorithm2} will not exceeds
$$\widehat{p}+\sum_{p=1}^{\widehat{p}}\frac{2\Omega^2\max\{1,M_g\}}{\hat{\varphi}^{2}(\varepsilon_{p})},\;\;\text{where}\;\;\varepsilon_{p}=\frac{\mu R^{2}_{0}}{2^{p+1}}.$$
\end{proof}

\end{document}